\begin{document}

\newtheorem{theorem}{Theorem}[section]
\newtheorem{lemma}[theorem]{Lemma}
\newtheorem{proposition}[theorem]{Proposition}
\newtheorem{corollary}[theorem]{Corollary}
\newtheorem{conjecture}[theorem]{Conjecture}
\newtheorem{question}[theorem]{Question}
\newtheorem{problem}[theorem]{Problem}
\newtheorem*{claim}{Claim}
\newtheorem*{criterion}{Criterion}
\newtheorem*{face_thm}{Theorem A}
\newtheorem*{rot_thm}{Theorem B}
\newtheorem*{immerse_thm}{Theorem C}

\theoremstyle{definition}
\newtheorem{definition}[theorem]{Definition}
\newtheorem{construction}[theorem]{Construction}
\newtheorem{notation}[theorem]{Notation}

\theoremstyle{remark}
\newtheorem{remark}[theorem]{Remark}
\newtheorem{example}[theorem]{Example}

\numberwithin{equation}{subsection}

\def\Z{\mathbb Z}
\def\N{\mathbb N}
\def\R{\mathbb R}
\def\Q{\mathbb Q}
\def\D{\mathcal D}
\def\E{\mathcal E}
\def\RR{\mathcal R}
\def\P{\mathcal P}
\def\F{\mathcal F}

\def\cl{\textnormal{cl}}
\def\scl{\textnormal{scl}}
\def\homeo{\textnormal{Homeo}}
\def\rot{\textnormal{rot}}
\def\area{\textnormal{area}}

\def\Id{\textnormal{Id}}
\def\SL{\textnormal{SL}}
\def\Sp{\textnormal{Sp}}
\def\PSL{\textnormal{PSL}}
\def\length{\textnormal{length}}
\def\fill{\textnormal{fill}}
\def\rank{\textnormal{rank}}
\def\til{\widetilde}

\title{Faces of the scl norm ball}
\author{Danny Calegari}
\address{Department of Mathematics \\ Caltech \\
Pasadena CA, 91125}
\email{dannyc@its.caltech.edu}

\date{1/22/2008, Version 0.09}

\begin{abstract}
Let $F = \pi_1(S)$ where $S$ is a compact, connected, oriented surface with $\chi(S)<0$ and nonempty
boundary.
\begin{enumerate}
\item{The projective class of the the chain $\partial S \in B_1^H(F)$ intersects the interior of a
codimension one face $\pi_S$ of the unit ball in the stable commutator length norm on $B_1^H(F)$.}
\item{The unique homogeneous quasimorphism on $F$ dual to $\pi_S$ (up to scale and elements of $H^1(F)$) is the rotation
quasimorphism associated to the action of $\pi_1(S)$ on the ideal boundary of the hyperbolic plane,
coming from a hyperbolic structure on $S$.}
\end{enumerate}
These facts follow from the fact that every homologically trivial $1$-chain $C$ in $S$
rationally cobounds an immersed surface with a sufficiently large
multiple of $\partial S$. This is true even if $S$ has no boundary.
\end{abstract}

\maketitle

\section{Introduction}

An immersed loop in the plane might or might not bound an immersed disk, and if it
does, the disk it bounds might not be unique. An immersed loop on a surface might
not bound an immersed subsurface, but admit a finite cover which does bound
an immersed subsurface --- i.e. it might ``virtually'' bound an immersed surface. 
Most homologically trivial geodesics on hyperbolic surfaces with boundary do not 
even virtually bound an immersed surface. However, in this paper, we show that
{\em every} homologically trivial geodesic in a {\em closed} hyperbolic surface $S$ virtually
bounds an immersed surface, and every homologically trivial geodesic in a hyperbolic
surface $S$ with boundary virtually cobounds an immersed surface together with a
sufficiently large multiple of $\partial S$. This has implications for the structure
of the (second) bounded cohomology of free and surface groups, as we explain in what follows.

\vskip 12pt

Given a group $G$ and an element $g \in [G,G]$, the {\em commutator length} of $g$,
denoted $\cl(g)$, is the smallest number of commutators in $G$ whose product is $g$,
and the {\em stable commutator length} of $g$ is the limit $\scl(g):=\lim_{n \to \infty} \cl(g^n)/n$.
Geometrically, if $X$ is a space with $\pi_1(X)=G$ and $g$ is represented by a loop 
$\gamma$ in $X$, the commutator length of $g$ is the least genus of a surface mapping to
$X$ whose boundary maps to $\gamma$. By minimizing number of triangles instead of genus, one can
reinterpret $\scl$ as a kind of $L^1$ norm on relative ($2$-dimensional) homology.
Technically, if $B_1(G;\R)$ denotes the vector space of real-valued (group) $1$-boundaries (i.e.
group $1$-chains which are boundaries of group $2$-chains; see \S~\ref{pseudo_norm_subsection}),
there is a well-defined $\scl$ pseudo-norm on $B_1(G;\R)$. The subspace on which $\scl$ vanishes
always includes a subspace $H$ spanned by chains of the form $g^n - ng$ and $g - hgh^{-1}$, 
for $n \in \Z$ and $g,h \in G$, and therefore $\scl$ descends to a pseudo-norm on the
quotient $B_1(G;\R)/H$, which we abbreviate by $B_1^H(G;\R)$ or $B_1^H(G)$
or even $B_1^H$ in the sequel. In certain special cases (for example, when $G$ is
a free group), $\scl$ defines an honest norm on $B_1^H(F)$, but we will not use this fact
in the sequel. More precise definitions and background are given in \S~\ref{background_section}.

Dual (in a certain sense) to the space $B_1^H(G;\R)$ with its $\scl$ pseudo-norm, is the
space $Q(G)$ of {\em homogeneous quasimorphisms} on $G$, i.e. functions $\phi:G \to \R$ 
for which there is a least real number $D(\phi)$ (called the {\em defect}) such that
 $\phi(g^n) = n\phi(g)$ for all $g \in G$ and $n \in \Z$, 
and $|\phi(g) + \phi(h) - \phi(gh)| \le D(\phi)$ for
all $g,h \in G$. The particular form
of duality between $\scl$ and $Q(G)$ is called {\em Bavard duality}, which is the equality
$$\scl(\sum t_i g_i) = \sup_{\phi \in Q(G)} \frac {\sum t_i \phi(g_i)} {2D(\phi)}$$
(see \S~\ref{background_section} for details).

\vskip 12pt

The defining properties of a homogeneous quasimorphism can be thought of as an infinite
family of linear equalities and inequalities depending on elements and pairs of
elements in $G$. The $L^1$-$L^\infty$ duality between $\scl$ and $Q(G)$ means that computing $\scl$
is tantamount to solving an (infinite dimensional) {\em linear programming problem} in
group homology (for an introduction to linear programming, see e.g. \cite{Dantzig}). 
In finite dimensions, $L^1$ and $L^\infty$ norms are piecewise linear functions,
and their unit balls are rational convex polyhedra. Broadly speaking, the main discovery of \cite{Calegari_pickle}
is that in free groups (and certain groups derived from free groups in simple ways), 
computing $\scl$ reduces to a {\em finite dimensional} linear programming problem, and
therefore the unit ball of the $\scl$ pseudo-norm on $B_1^H(F;\R)$ is a rational convex polyhedron;
i.e. for every finite dimensional rational vector subspace $V$ of $B_1(F;\R)$, the
unit ball of the $\scl$ pseudo-norm restricted to $V$ is a finite-sided rational convex polyhedron
(compare with the well-known example of the Gromov-Thurston norm on $H_2$ of a $3$-manifold;
see \cite{Thurston_norm}).

\vskip 12pt

In a finite dimensional vector space, a rational convex polyhedron is characterized by its top
dimensional faces --- i.e. those which are codimension one in the ambient space.
In an {\em infinite} dimensional vector space, a rational convex polyhedron
need not have any faces of finite codimension at all. The codimension of a face of a convex 
polyhedron in an infinite dimensional vector space is the supremum of the codimensions of
its intersections with finite dimensional subspaces.
Top dimensional faces of the unit ball of the Gromov-Thurston norm on $H_2$ of a $3$-manifold have a
great deal of topological significance (see e.g \cite{Thurston_norm}, \cite{Kronheimer_Mrowka}, 
\cite{Calegari_quasi} or \cite{Oszvath_Szabo} for connections with the theories of 
taut foliations, Seiberg-Witten equations, quasigeodesic flows,
and Heegaard Floer homology respectively). It is therefore a natural question to ask whether the
$\scl$ unit polyhedron in $B_1^H(F;\R)$ has any faces which are codimension one in
$B_1^H(F;\R)$, and whether some of these faces have any geometric significance. Our
first two main theorems answer these questions affirmatively. 

\begin{face_thm}
Let $F$ be a free group, and let $S$ be a compact, connected, orientable surface with
$\chi(S)<0$ and $\pi_1(S) = F$. Let $\partial S \in B_1^H(F;\R)$ be the $1$-chain represented
by the boundary of $S$, thought of as a finite formal sum of conjugacy classes in $F$.
Then the projective ray in $B_1^H(F;\R)$ spanned by $\partial S$ intersects the
unit ball of the $\scl$ norm in the interior of a face of codimension one in $B_1^H(F;\R)$.
\end{face_thm}

By Bavard duality, a face of the $\scl$ norm ball of codimension one is dual to
a unique extremal homogeneous quasimorphism, up to elements of $H^1$ (which vanish
identically on $B_1^H$). It turns out that we can give an explicit description of
the extremal quasimorphisms dual to the ``geometric'' faces of the $\scl$ norm ball described in
Theorem~A.

If $S$ is a compact, connected, orientable surface with
$\chi(S)<0$, then $S$ admits a hyperbolic structure with geodesic boundary. The
hyperbolic structure and a choice of orientation determine a discrete, faithful
representation $\rho:\pi_1(S) \to \PSL(2,\R)$, unique up to conjugacy. Since $\pi_1(S)$
is free, this representation lifts to $\til{\rho}:\pi_1(S) \to \til{\SL}(2,\R)$ where
$\til{\SL}(2,\R)$ denotes the universal covering group of $\PSL(2,\R)$.
There is a unique continuous homogeneous quasimorphism on $\til{\SL}(2,\R)$ (up to scale), called the
{\em rotation quasimorphism} (discussed in detail in \S~\ref{rotation_subsection}). 
This quasimorphism pulls back by $\til{\rho}$ to a homogeneous quasimorphism $\rot_S$ on
$\pi_1(S)$, which is well-defined up to elements of $H^1(S;\Z)$. Up to scale, this turns
out to be the homogeneous quasimorphism dual to the top dimensional face of the $\scl$
norm ball described above:

\begin{rot_thm}
Let $F$ be a free group, and let $S$ be a compact, connected, orientable surface
with $\chi(S)<0$ and $\pi_1(S) = F$. Let $\pi_S$ be the face of the $\scl$ unit norm ball
whose interior intersects the projective ray of the class $\partial S$. The face
$\pi_S$ is dual to the extremal homogeneous quasimorphism $\rot_S$.
\end{rot_thm}

These theorems are both proved in \S~3.

Theorem~A shows how hyperbolic geometry
and surface topology manifest in the abstract (bounded) cohomology of a free
group. Theorem~B is a kind of rigidity result, characterizing the rotation quasimorphism associated to a
discrete, faithful representation of $\pi_1(S)$ into $\PSL(2,\R)$ amongst all homogeneous quasimorphisms
by the property that it is ``extremal'' for $\partial S$. In \S~\ref{remark_corollary_subsection} 
we use these theorems to deduce a short proof of a relative version of
rigidity theorems of Goldman \cite{Goldman} and Burger-Iozzi-Wienhard
\cite{Burger_Iozzi_Wienhard}, that representations of surface groups into
certain Lie groups of maximal Euler class are discrete (it should be stressed that
\cite{Goldman, Burger_Iozzi_Wienhard} contain much more than the narrow result
we reprove).

In light of Theorem~A and Theorem~B, it is natural to ask whether the projective
class of every element $g \in [F,F]$ intersects the interior of a face of the
$\scl$ norm ball of finite codimension. In fact, it turns out that this is not the case.
We show by an explicit example (Example~\ref{infinite_codimension}) that there
are many elements $g \in [F,F]$ where $F$ has rank at least $4$, whose projective classes
are contained in faces of the $\scl$ norm ball of infinite codimension.

\vskip 12pt

The method of proof is of independent interest. We show that for any homologically
trivial geodesic $1$-manifold $\gamma$ in a hyperbolic surface $S$, there is a surface $T$
and an immersion $f:T,\partial T \to S,\gamma$ for which $f_*[\partial T]$ is taken to
some multiple of $[\gamma] + n[\partial S]$ in $H_1$; i.e. the $1$-cycle $\gamma + n\partial S$ 
``rationally bounds'' an immersed surface. Note that this remains true even if $S$ is closed! 
Explicitly, the statement of the main technical theorem (proved in \S~\ref{immersion_theorem_subsection}) 
is as follows:

\begin{immerse_thm}
Let $S$ be a compact, connected orientable surface with $\chi(S)<0$, and $C = \sum r_i g_i$ a finite
rational chain in $B_1^H(\pi_1(S))$. Then for all sufficiently large rational
numbers $R$ (depending on $C$), the geodesic $1$-manifold in $S$ corresponding to the
chain $R\partial S + \sum r_i g_i$ rationally bounds
an immersed surface $f:T \to S$.
\end{immerse_thm}

The connection with stable commutator length is as follows: from the main theorem of \cite{Calegari_pickle}
it follows that in an oriented hyperbolic surface $S$ with boundary,
a rational $1$-chain $C$ bounds an immersed surface if and only if $\scl(C) = \rot_S(C)/2$,
where $\rot_S$ is as above (this is Proposition~\ref{rot_is_scl} below). Hence Theorem~C implies
that every chain $C$ in $B_1^H(\pi_1(S))$ which is projectively close enough to $\partial S$
satisfies $\scl(C) = \rot_S(C)/2$; Theorems A and B follow.

\vskip 12pt

A number of additional corollaries are stated, including a generalization of the main
theorem of \cite{Calegari_surface}. Let $G$ be a graph of free or (closed, orientable)
hyperbolic surface groups amalgamated over infinite cyclic subgroups, 
and let $A$ be a nonzero rational class in $H_2(G)$.
Let $[S_1],\cdots,[S_m]$ be the fundamental classes in $H_2$ of the vertex
subgroups which are closed surface groups. Then for all sufficiently big integers $n_i$,
some multiple of the class $A + \sum_i n_i\cdot[S_i]$ in $H_2(G)$ is represented by an injective map from a
closed hyperbolic surface group to $G$.

\section{Background}\label{background_section}

\subsection{Definitions}\label{definition_subsection}

The following definition is standard; see \cite{Bavard} or \cite{Calegari_scl}, \S~2.1.

\begin{definition}
Let $G$ be a group, and $g \in [G,G]$. The {\em commutator length} of $g$, denoted
$\cl(g)$, is the smallest number of commutators in $G$ whose product is $g$. 
\end{definition}

Topologically, if $X$ is a space with $\pi_1(X) = G$, and $\gamma:S^1 \to X$ is
a loop representing the conjugacy class of $g$, then $\cl(g)$ is the least genus of
a compact oriented surface $S$ with one boundary component for which there is a map
$f:S \to X$ with $f|_{\partial S}$ in the free homotopy class of $\gamma$.

\begin{definition}
Let $G$ be a group, and $g \in [G,G]$.
The {\em stable commutator length} of $g$, denoted $\scl(g)$, is the limit
$$\scl(g) = \lim_{n \to \infty} \frac {\cl(g^n)} {n}$$
\end{definition}

Commutator length and stable commutator length can be extended to finite linear sums of
groups elements as follows:
\begin{definition}\label{scl_sum_definition}
Let $G$ be a group, and $g_1,g_2,\cdots,g_m$ elements of $G$ whose product is in $[G,G]$.
Define
$$\cl(g_1 + \cdots + g_m) = \inf_{h_i \in G} \cl(g_1h_1g_2h_1^{-1}\cdots h_{m-1}g_mh_{m-1}^{-1})$$
and
$$\scl(g_1 + \cdots + g_m) = \lim_{n \to \infty} \frac {\cl(g_1^n + \cdots + g_m^n)} {n}$$
\end{definition}

A geometric interpretation of these numbers will be given in \S~\ref{extremal_surfaces_section}.

It is a fact that the limit in Definition~\ref{scl_sum_definition}
exists, and satisfies $\scl(g^n + \sum g_i) = \scl(ng + \sum g_i)$
and $\scl(g + g^{-1} + \sum g_i ) = \scl(\sum g_i)$ for all $g,g_i \in G$ and $n \in \N$. So
it makes sense to define $\scl(\sum n_i g_i) = \scl(\sum g_i^{n_i})$ for any $n_i \in \Z$ and $g_i \in G$.
With this definition, it is immediate that $\scl$ is subadditive; i.e.
$$\scl\Bigl(\sum g_i + \sum h_j\Bigr) \le \scl\Bigl(\sum g_i\Bigr) + \scl\Bigl(\sum h_j\Bigr)$$

If we need to emphasize the group $G$ we denote $\cl$ and $\scl$ by $\cl_G$ and $\scl_G$
respectively. 

\vskip 12pt

One may reduce the calculation of $\scl$ on finite sums to a calculation of
$\scl$ on individual elements, by the following ``addition lemma":

\begin{lemma}[Addition Lemma]\label{addition_lemma}
Let $g_1, \cdots, g_m \in G$ have infinite order. Let $H = G* F_{m-1}$ where
$F_{m-1}$ is freely generated by $x_1,x_2,\cdots,x_{m-1}$. Then
$$\scl_G(g_1 + \cdots + g_m) + \frac {m-1} 2 = \scl_H(g_1x_1g_2x_1^{-1}\cdots x_{m-1}g_mx_{m-1}^{-1})$$
\end{lemma}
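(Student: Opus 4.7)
The plan is to prove both inequalities using the topological interpretation of stable commutator length: for a group $K$ and a chain $C \in B_1^H(K)$, one has $\scl_K(C) = \inf -\chi(S)/(2n(f))$, where $f : S \to K(K, 1)$ runs over admissible maps with $\partial S$ representing $n(f) \cdot C$. I will use the Eilenberg-MacLane model $K(H, 1) = K(G, 1) \vee \bigvee_{i = 1}^{m - 1} C_i$, where each $C_i \cong S^1$ carries the generator $x_i$ and $K(G, 1)$ sits inside as a subcomplex.

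For the upper bound $\scl_H(w) \leq \scl_G(\sum g_i) + (m - 1)/2$, I start with a near-minimizing admissible surface $f' : S' \to K(G, 1)$ whose single boundary spells the word $w_n := g_1^n h_1 g_2^n h_1^{-1} \cdots h_{m-1} g_m^n h_{m-1}^{-1}$ for a near-optimal choice of conjugators $h_i \in G$, so that $-\chi(S')/(2n)$ is within $\epsilon$ of $\scl_G(\sum g_i)$ for large $n$ by Definition~\ref{scl_sum_definition}. I build $S \to K(H, 1)$ from $S'$ by attaching $n(m - 1)$ bands, with $n$ bands for each index $i$ whose core arcs traverse the circle $C_i$. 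The attachment pattern is arranged so that the $g_i^n$ blocks on $\partial S'$ are subdivided into $n$ unit pieces, the $h_i$-arcs are absorbed into the bands, and the new boundary spells $w^n$ in the interleaved order prescribed by $w = g_1 x_1 g_2 x_1^{-1} \cdots x_{m-1} g_m x_{m-1}^{-1}$. Each band drops $\chi$ by one, so $-\chi(S) = -\chi(S') + n(m - 1)$, and dividing by $2n$ and sending $\epsilon \to 0$ yields the desired bound.

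For the lower bound $\scl_H(w) \geq \scl_G(\sum g_i) + (m - 1)/2$, I take a near-minimizing admissible $f : S \to K(H, 1)$ with $\partial S \to w^n$ and make $f$ transverse to the midpoints $p_i \in C_i$. The preimage $\Gamma := f^{-1}(\bigcup p_i)$ is a proper 1-submanifold of $S$; a standard compression argument (using that $K(H, 1)$ has contractible universal cover) eliminates any closed circle components of $\Gamma$ at no cost to $\chi$, leaving $n(m - 1)$ arcs, $n$ per index $i$, since $w^n$ meets each $p_i$ geometrically $2n$ times. Cutting $S$ along $\Gamma$ produces $S''$ with $-\chi(S'') = -\chi(S) - n(m - 1)$, each arc cut contributing $+1$ to $\chi$. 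After retracting the half-$x_i$ end-segments of $\partial S''$ into the basepoint, $f|_{S''}$ lands in $K(G, 1)$ with boundary representing, in $B_1^H(G)$, the chain $n \sum g_i$. Hence $-\chi(S'')/2 \geq n \cdot \scl_G(\sum g_i)$, which after rearranging gives the inequality.

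The main obstacle is the combinatorial band construction for the upper bound: one must verify that the bands can be attached in a planar, non-interfering way yielding the exact boundary word $w^n$, rather than some other word in the same conjugacy class or abelianization. A related subtlety in the lower bound is identifying the boundary of $S''$ with $n \sum g_i$ in $B_1^H(G)$ rather than merely matching its image in $H_1(G)$: a priori the boundary components are products of several $g_i$'s, so one invokes the conjugacy and power relations in $B_1^H$ (together with minor bounded-cost surgery, if necessary, to separate long boundary components into individual $g_i$-loops) to identify the chain correctly.
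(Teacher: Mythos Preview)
The paper does not prove this lemma in place; it simply cites Theorem~2.93 of the monograph \cite{Calegari_scl} and invokes induction on $m$. Your direct cut-and-paste argument is therefore a genuinely different route, and its overall architecture is the right one. There is, however, a real gap in the lower bound and a smaller one in the upper bound.

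For the lower bound, you correctly flag that after cutting $S$ along $\Gamma$ a boundary circle of $S''$ might \emph{a priori} read a mixed word such as $g_1 g_3 g_2$, which is \emph{not} equal to $g_1+g_3+g_2$ in $B_1^H(G)$. But your proposed fix, ``minor bounded-cost surgery'', fails: there are $nm$ elementary $g_i$-segments on $\partial S''$ and possibly only $O(1)$ boundary components, so separating them can cost $\Theta(n)$ in $-\chi$, which wipes out the very $(m-1)/2$ you are trying to extract. The actual situation is better than you fear and no surgery is needed. Because $S$ is oriented, each arc of $\Gamma_i$ carries a consistent transverse co-orientation and joins exactly one $x_i$-crossing to one $x_i^{-1}$-crossing on $\partial S$. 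Inspecting the word $w = g_1\prod_i (x_i g_{i+1} x_i^{-1})$, the ``$+$'' side of every $\Gamma_i$-arc abuts a $g_{i+1}$-segment at each end, while every ``$-$'' side abuts only $g_1$-segments or trivial segments. Hence each boundary circle of $S''$ already reads a pure power $g_j^k$, and $S''$ is, as it stands, an admissible surface for $n\sum_j g_j$.

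For the upper bound, the phrase ``the $h_i$-arcs are absorbed into the bands'' is the sticking point: the $h_i$ lie in $G$, whereas your band cores lie over the circles $C_i$ and their feet must land at the wedge point, so the bands cannot swallow the $h_i$'s. A cleaner argument bypasses the combinatorics entirely. The planar surface $P$ (a disc with $m$ sub-discs removed) realizing the factorization $w = g_1\cdot(x_1 g_2 x_1^{-1})\cdots(x_{m-1} g_m x_{m-1}^{-1})$ has $-\chi(P)=m-1$ and boundary representing $w-\sum_i g_i$ in $B_1^H(H)$; subadditivity gives $\scl_H(w)\le\scl_H\bigl(\sum g_i\bigr)+(m-1)/2$, and the retraction $H\to G$ killing the $x_i$ shows $\scl_H\bigl(\sum g_i\bigr)=\scl_G\bigl(\sum g_i\bigr)$.
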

This follows from \cite{Calegari_scl}, Thm.~2.93 and induction. When the $g_i$
have finite order, the formula must be corrected (in a straightforward way).

\subsection{scl as a pseudo-norm}\label{pseudo_norm_subsection}

It is convenient to use the language of homological algebra. Given a group $G$,
one has the complex of real group chains $C_*(G;\R),\partial$ whose homology is the
real (group) homology of $G$; see Mac Lane (\cite{Maclane}, Ch.~IV, \S~5). A
real (group) $n$-chain is a finite formal real linear combination of elements of $G^n$,
so (for instance) a real (group) $1$-chain is just a finite formal real linear combination
of elements of $G$. Denote the group of (real) $1$-boundaries by $B_1(G;\R)$, or
$B_1(G)$ for short.

The properties of $\scl$ enumerated in \S~\ref{definition_subsection} imply that the
function $\scl$ is well-defined, linear and subadditive on finite
integral group $1$-boundaries, and therefore admits a unique linear continuous extension
to $B_1(G)$.

Moreover, $\scl$ vanishes on the subspace $H$ of $B_1$ spanned by chains of the form $g^n - ng$ and
$g - hgh^{-1}$ for any $g,h \in G$ and $n \in \Z$. Thus $\scl$ defines
a pseudo-norm on $B_1^H:=B_1/H$. See \cite{Calegari_scl}, \S~2.6 for proofs
of these basic facts.

\vskip 12pt

In \cite{Calegari_pickle} an algorithm is described to compute $\scl$ on elements of
$B_1^H(F)$ where $F$ is a free group. The program {\tt scallop} (source available at \cite{scallop}) implements
a polynomial-time version of this algorithm, described in \cite{Calegari_scl}, \S~4.1.7--4.1.8. 
At a number of points in this paper we make assertions
about the value of $\scl$ on certain chains in $B_1^H(F)$; these assertions are justified
using the program {\tt scallop}.

\begin{remark}
In the final analysis, our main theorems do not depend logically on the computations carried
out with the aid of {\tt scallop} (also see Remark~\ref{explicit_surface_remark}). 
Nevertheless, these computations were an essential part of the process by which these theorems were
discovered.
\end{remark} 

\subsection{Extremal surfaces}\label{extremal_surfaces_section}

The definition of stable commutator length can be reinterpreted in geometric terms.
Let $X$ be a space, and $\gamma_1,\cdots,\gamma_m:S^1 \to X$ nontrivial free homotopy
classes of loops in $X$. Let $f:S \to X$ be a surface for which there is a commutative
diagram
$$ \begin{CD}
\partial S @>i>> S \\
@V\partial fVV	@VVfV \\
\coprod_i S^1 @>\coprod_i\gamma_i>> X 
\end{CD}$$
where $i:\partial S \to S$ is the inclusion map, and $\partial f_*[\partial S] = n[\coprod_i S^1]$ in $H_1$
for some $n$.
  
Define $\chi^-(S)$ to be the sum of Euler characteristic $\chi$ over all components of $S$
for which $\chi$ is non-positive. Then there is an equality
$$\scl_G(g_1 + \cdots + g_m) = \inf_{S} \frac {-\chi^-(S)} {2n}$$
over all compact oriented surfaces $S$ as above, where $G = \pi_1(X)$, and $\gamma_i$
represents the conjugacy class of $g_i$. See \cite{Calegari_scl}, Prop.~2.68 for a proof.

\begin{definition}\label{geometric_scl_definition}
The chain $g_1 + \cdots + g_m$ is said to 
{\em rationally bound} a surface $f:S \to X$ for which there is a commutative diagram as above. 
A surface with this property is {\em extremal} if every component of $S$ has negative
Euler characteristic, and there is equality
$$\scl(g_1 + \cdots + g_m) = \frac {-\chi^-(S)} {2n}$$
\end{definition}
Extremal surfaces are $\pi_1$-injective (\cite{Calegari_scl}, Prop.~2.96).

\subsection{Quasimorphisms}\label{duality_subsection}

\begin{definition}
Let $G$ be a group. A function $\phi:G \to \R$ is a {\em homogeneous quasimorphism}
if it satisfies $\phi(g^n) = n\phi(g)$ for all $g \in G$ and $n \in \Z$, and if
there is a least non-negative real number $D(\phi)$ called the {\em defect}, such that
for all $g,h \in G$ there is an inequality
$$|\phi(g) + \phi(h) - \phi(gh)| \le D(\phi)$$
\end{definition}

The set of homogeneous quasimorphisms on a group $G$ is a real vector space, and is
denoted $Q(G)$. A homogeneous quasimorphism has defect $0$ if and only if it is a homomorphism.
Thus $H^1(G;\R)$ is a vector subspace of $Q(G)$.
The defect $D$ defines a natural norm on $Q(G)/H^1(G;\R)$, making it into a Banach space.
Any real-valued function on $G$ extends by linearity to define a $1$-cochain. There
is an exact sequence
$$0 \to H^1(G;\R) \to Q(G) \xrightarrow{\delta} H^2_b(G;\R) \to H^2(G;\R)$$
where $\delta$ is the coboundary map on $1$-cochains, and
$H^*_b$ denotes bounded (group) cohomology. See \cite{Bavard} or \cite{Calegari_scl}, \S~2.4
for an explanation of these facts,
and Gromov \cite{Gromov_bounded} for an introduction to bounded cohomology.

There is a duality between stable commutator length and quasimorphisms, called {\em Bavard duality}.
For chains in $B_1^H(G)$, this takes the following form

\begin{theorem}[Bavard duality]\label{bavard_duality_theorem}
Let $C = \sum t_i g_i$ be an element of $B_1^H(G;\R)$. Then there is an equality
$$\scl(C) = \sup_{\phi \in Q(G)/H^1(G)} \frac {\sum t_i \phi(g_i)} {2D(\phi)}$$
\end{theorem}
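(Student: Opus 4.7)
The plan is to prove the two inequalities of Bavard duality separately; the easy direction $\sup_\phi \phi(C)/(2D(\phi)) \le \scl(C)$ is a direct computation from the defect inequality, while the harder direction uses Hahn-Banach to extract a quasimorphism from the $\scl$ seminorm.

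For the easy direction, I would first check that if $w$ is a product of $k$ commutators in $G$, then iterating the defect inequality and using $\phi(g^{-1}) = -\phi(g)$ yields $|\phi(w)| \le (2k-1)D(\phi)$; the base case is $|\phi([a,b])| \le D(\phi)$. Given a rational chain $C = \sum t_i g_i \in B_1^H$, Definition \ref{scl_sum_definition} provides a sequence of words of the form $w_M = \prod_i h_{i,M}\, g_i^{Mt_i}\, h_{i,M}^{-1}$ (with $Mt_i \in \Z$) whose commutator lengths satisfy $\cl(w_M)/M \to \scl(C)$. Using homogeneity and conjugation invariance of $\phi$, one gets $\phi(w_M) = M\phi(C) + O((m-1)D(\phi))$, and combined with $|\phi(w_M)| \le (2\cl(w_M)-1)D(\phi)$, dividing by $M$ and letting $M \to \infty$ yields $|\phi(C)| \le 2\scl(C)D(\phi)$; the real-coefficient case follows by density.

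For the hard direction, fix $C_0 \in B_1^H$ with $\scl(C_0) = s > 0$ and define $\ell_0: \R C_0 \to \R$ by $\ell_0(tC_0) = ts$. Since $\scl(tC_0) = |t|s$, $\ell_0$ is dominated by the seminorm $\scl$ on $\R C_0$. By Hahn-Banach extend $\ell_0$ to $\ell: B_1^H \to \R$ with $|\ell(C)| \le \scl(C)$ everywhere. Lift $\ell$ through the quotient $B_1(G;\R) \to B_1^H$, then extend linearly to $C_1(G;\R) = \R[G]$. Restricting to the basis $G$ produces a function $\phi: G \to \R$. Because $\phi$ vanishes on $H \subset B_1$, the identities $\phi(g^n) = n\phi(g)$ for all $n \in \Z$ and $\phi(hgh^{-1}) = \phi(g)$ hold automatically. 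For the defect bound, the 1-chain $a + b - ab$ is the boundary of $[a|b]$ in the bar resolution, hence lies in $B_1$; geometrically it is rationally bounded by a pair of pants $P$ mapping to any $K(G,1)$ with boundary loops freely homotopic to $a, b,$ and $(ab)^{-1}$. Since $\chi(P) = -1$ and each boundary wraps once, $\scl(a + b - ab) \le 1/2$, so
$$|\phi(a) + \phi(b) - \phi(ab)| = |\ell(a + b - ab)| \le \tfrac{1}{2},$$
and therefore $D(\phi) \le 1/2$. Together with $\phi(C_0) = \ell(C_0) = s$, this gives $\phi(C_0)/(2D(\phi)) \ge s$, matching the easy direction.

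The principal obstacle is the conversion of the abstract Hahn-Banach functional $\ell$ into a genuine homogeneous quasimorphism with controlled defect: homogeneity and conjugation invariance come for free from vanishing on $H$, but the defect bound relies on the geometric input $\scl(a + b - ab) \le 1/2$ supplied by the pair-of-pants construction.
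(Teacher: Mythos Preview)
The paper does not actually prove Theorem~\ref{bavard_duality_theorem}; it simply cites Bavard's original paper and Calegari's monograph \emph{scl} (Thm.~2.73) for the argument. Your proposal reproduces precisely the standard proof found in those references: the easy direction via the estimate $|\phi(w)|\le (2\cl(w)-1)D(\phi)$ for homogeneous $\phi$, and the hard direction via Hahn--Banach applied to the $\scl$ seminorm on $B_1^H$, with the defect bound coming from $\scl(a+b-ab)\le 1/2$ (the pair of pants). The argument is correct as written; the only point worth making explicit is that the linear extension of $\ell$ from $B_1$ to all of $C_1(G;\R)$ requires a choice of complement, but since the chains $g^n-ng$, $g-hgh^{-1}$, and $a+b-ab$ all already lie in $B_1$, the values of $\phi$ on them are determined by $\ell$ and independent of that choice.
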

See \cite{Bavard} for a proof when $C$ is an element of $[G,G]$, or
\cite{Calegari_scl}, Thm.~2.73 for the general case.

\begin{definition}
A quasimorphism $\phi \in Q(G)$ is {\em extremal} for a chain $C= \sum t_i g_i \in B_1^H(G;\R)$
if there is equality
$$\scl(C) = \frac {\sum t_i \phi(g_i)} {2D(\phi)}$$
\end{definition}

Given a chain $C$, the set of extremal quasimorphisms for $C$ is a nonempty convex cone, and is
closed (away from $0$) in the natural Banach space topology on $Q(G)$, as well as in the topology of
termwise convergence in $\R^G$. See \cite{Calegari_scl}, Prop.~2.81 for a proof.

\section{Immersed surfaces}\label{immersed_surfaces_section}

\subsection{Doodles}

The question of which immersed loops in surfaces bound immersed subsurfaces is subtle and interesting,
and has fascinated many mathematicians (see e.g. \cite{Grothendieck}, p.~47).
An immersed loop in the plane which bounds an immersed disk necessarily has
winding number $\pm 1$, but not every loop with winding number $\pm 1$ bounds
an immersed disk. See Figure~\ref{two_bad_figure}.

\begin{figure}[htpb]
\labellist
\small\hair 2pt
%\pinlabel $\nu$ at -20 75
\endlabellist
\centering
\includegraphics[scale=0.35]{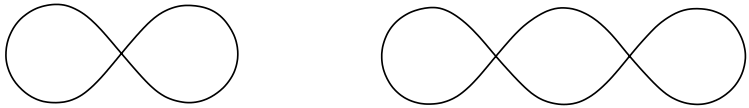}
\caption{A figure $8$ has winding number $0$, and therefore cannot bound an
immersed disk. But a ``double $8$'' has winding number $1$, and does not bound
an immersed disk either} \label{two_bad_figure}
\end{figure}

Blank \cite{Blank} gave an algorithm to determine which immersed loops in the plane
bound immersed disks; his algorithm was extended to other surfaces by Francis \cite{Francis}
and others, but the answer is not very illuminating: some curves bound immersed disks, some
don't, and the reason is complicated. Other authors have studied the existence of {\em branched}
immersions with prescribed boundary, which are much easier to construct.

Milnor \cite{Milnor_doodle} gave a well-known example of an immersed loop in the plane which bounds
two different immersed disks. See Figure~\ref{milnor_figure}.

\begin{figure}[htpb]
\labellist
\small\hair 2pt
%\pinlabel $\nu$ at -20 75
\endlabellist
\centering
\includegraphics[scale=0.35]{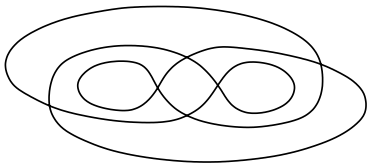}
\caption{Milnor's ``doodle'' bounds an immersed disk in two inequivalent ways} \label{milnor_figure}
\end{figure}

On a hyperbolic surface $S$, every homotopy class of essential loop contains a
canonical geodesic representative. One can therefore ask which conjugacy classes
in $\pi_1(S)$ are represented by geodesics which bound immersed surfaces. The answer
turns out to be independent of the choice of hyperbolic structure on $S$, and therefore
in principle is a purely ``algebraic'' problem.

One subtle aspect of the problem is illustrated by the example in Figure~\ref{virtual_bound_figure}. This shows
an immersed loop $\gamma$ (in the isotopy class of a geodesic)
in a genus $2$ punctured surface which does not bound an immersed surface,
but which ``virtually'' bounds an immersed surface: there is an immersed surface with
two boundary components, each of which wraps positively once around $\gamma$.

\begin{figure}[htpb]
\labellist
\small\hair 2pt
\pinlabel $\gamma$ at 220 75
\endlabellist
\centering
\includegraphics[scale=0.35]{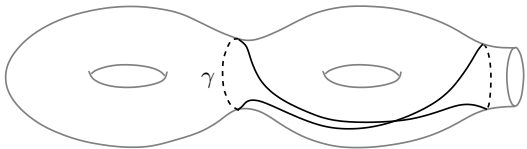}
\caption{The loop $\gamma$ does not bound an immersed surface, but two copies of
$\gamma$ do} \label{virtual_bound_figure}
\end{figure}

One is therefore led to study the following question. If $S$ is an orientable hyperbolic surface,
which conjugacy classes $g$ in $\pi_1(S)$ are represented by geodesics which virtually
bound immersed surfaces? That is, when is there an immersed surface in $S$,
all of whose boundary components wrap positively around the geodesic representative
of $g$? Only a homologically trivial loop virtually bounds a surface
at all, so we restrict attention to $g$ in the commutator subgroup.
It turns out that we can give a complete and somewhat surprising
answer to this question. If $S$ is closed, then {\em every} homologically trivial geodesic
virtually bounds an immersed surface. If $S$ has (geodesic) boundary, then a homologically
trivial geodesic corresponding to a conjugacy
class $g$ virtually bounds an immersed surface if and only if the projective class of
$g$, thought of as a $1$-boundary, intersects a certain top dimensional face of 
the unit ball in the $\scl$ pseudo-norm on $B_1^H(\pi_1(S))$. This is explained in
the remainder of this section.

\subsection{Positive immersed surfaces}

\begin{definition}
An immersion $f:T \to S$ between oriented surfaces is {\em positive} if it is orientation-preserving.
\end{definition}

\begin{definition}\label{1_manifold_bounds_definition}
Let $\gamma:\coprod_i S^1 \to S$ be an immersed $1$-manifold in $S$. The $1$-manifold
$\gamma$ {\em bounds} a (positive) immersion
$f:T \to S$ if there is a commutative diagram
$$ \begin{CD}
\partial T @>i>> T \\
@V\partial fVV	@VVfV \\
\coprod_i S^1 @>\gamma>> S
\end{CD}$$
for which $\partial f:\partial T \to \coprod_i S^1$ is an orientation-preserving
homeomorphism. The $1$-manifold $\gamma$ {\em rationally bounds} a positive immersion $f:T \to S$ 
if there is some integer $n$, and a commutative
diagram as above, for which $\partial f:\partial T \to \coprod_i S^1$ is a positive immersion
(i.e. an orientation-preserving covering map) such that $\partial f_*[\partial T] = n[\coprod_i S^1]$
in $H_1$.
\end{definition}
Compare with Definition~\ref{geometric_scl_definition}.

We are concerned in the sequel with the case that $S$ is compact, connected and oriented, possibly with
boundary, and satisfying $\chi(S) < 0$. The surface $S$ admits a (nonunique)
hyperbolic structure with totally geodesic boundary; we fix such a structure.
Let $C= \sum n_i g_i$ be a chain in $B_1^H(\pi_1(S);\R)$ where the $n_i$ are integers, and the
$g_i$ are primitive.

For each $i$, let $\gamma_i:S^1 \to S$ be a geodesic loop corresponding to the conjugacy class
of $g_i$, and let $\gamma:\coprod_i S^1 \to S$ be the union of the $\gamma_i$. We say that
$C$ rationally bounds a positive immersed surface if there is an integer $n$ and a positive
immersion $f:T \to S$ as in Definition~\ref{1_manifold_bounds_definition} for which
$\partial f_*[\partial T] = n \cdot (\sum_i n_i [S^1_i])$ in $H_1$, where $\gamma_i:S^1_i \to S$.

\begin{lemma}\label{addition_is_transitive}
Suppose chains $C_1,C_2$ rationally bound positive immersed surfaces. Then $C_1 + C_2$
rationally bounds a positive immersed surface.
\end{lemma}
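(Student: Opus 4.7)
The statement is essentially a bookkeeping lemma, and the plan is the obvious one: stack rescaled copies of the two given surfaces disjointly and verify the definition. The main content will be choosing compatible multipliers.

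Suppose $f_1 \colon T_1 \to S$ witnesses that $C_1$ rationally bounds a positive immersion, with integer multiplier $n_1$ in the sense of Definition~\ref{1_manifold_bounds_definition}; likewise $f_2 \colon T_2 \to S$ for $C_2$ with multiplier $n_2$. The idea is to form $T$ as the disjoint union of $n_2$ copies of $T_1$ and $n_1$ copies of $T_2$, and let $f \colon T \to S$ be the disjoint union of the corresponding copies of $f_1$ and $f_2$. Being a positive immersion is a local property, so the disjoint union of positive immersions into $S$ is again a positive immersion; similarly for $\partial f$ restricted to the boundary. Hence the bulk of the work is to check the homological condition on $\partial f_*[\partial T]$.

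For the boundary calculation, I would first write $C_1 = \sum_i a_i g_i$ and $C_2 = \sum_i b_i g_i$, allowing the same primitive $g_i$ to appear with possibly zero coefficient in either chain (this sidesteps the cosmetic nuisance of loops appearing in both $C_1$ and $C_2$). Let $\gamma_i \colon S^1_i \to S$ denote the geodesic representative of the conjugacy class of $g_i$. By hypothesis, the component of $\partial T_1$ over $S^1_i$ covers it by an orientation-preserving covering of degree $n_1 a_i$ (and similarly $\partial T_2$ covers $S^1_i$ with degree $n_2 b_i$). In the composite $T = (n_2 T_1) \sqcup (n_1 T_2)$, the boundary therefore covers $S^1_i$ with total (positive) degree
\[
n_2 \cdot (n_1 a_i) + n_1 \cdot (n_2 b_i) \;=\; n_1 n_2 (a_i + b_i),
\]
which is exactly $n_1 n_2$ times the coefficient of $g_i$ in $C_1 + C_2$. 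Thus $f$ witnesses that $C_1 + C_2$ rationally bounds a positive immersion with multiplier $n_1 n_2$.

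There is essentially no obstacle; the only point requiring a moment's attention is that the definition of ``rationally bounds'' demands $\partial f$ be a positive (not merely degree-positive) immersion, so one must take disjoint copies rather than try to glue or cancel anything. Taking the common multiple $n_1 n_2$ makes this automatic and avoids any need to cancel boundary components, which would destroy positivity.
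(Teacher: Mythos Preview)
Your argument has a genuine gap, and it is exactly the gap the paper's own proof is designed to fill. The disjoint union works only when every primitive conjugacy class appearing in both $C_1$ and $C_2$ appears with the \emph{same sign}. When a class $g$ occurs with positive coefficient in $C_1$ and with negative coefficient in $C_2$ (equivalently, $g^{-1}$ occurs positively in $C_2$), your construction breaks down.

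Concretely: by definition, $\partial f$ must be an \emph{orientation-preserving} covering map onto the $1$-manifold associated to the chain. Hence in a surface witnessing $C_1$, the boundary components over the geodesic $\gamma$ of $g$ cover $\gamma$ positively; in a surface witnessing $C_2$, the corresponding boundary components cover $\gamma^{-1}$ positively, i.e.\ $\gamma$ \emph{negatively}. In your $T=(n_2\,T_1)\sqcup(n_1\,T_2)$, both kinds of boundary components are present. If the coefficient of $g$ in the reduced form of $C_1+C_2$ is nonzero, the target $1$-manifold contains $\gamma$ with a single orientation, and the components of $\partial T$ coming from $T_2$ cannot map there by a positive immersion. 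If the coefficient of $g$ in $C_1+C_2$ is zero, the situation is worse: $\gamma$ does not appear in the target $1$-manifold at all, and those boundary components of $T$ have nowhere to go in the required commutative diagram. Your sentence ``one must take disjoint copies rather than try to glue or cancel anything'' is therefore exactly backwards in this case.

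The paper's proof addresses this by passing to finite covers of the two surfaces so that all boundary components over $\gamma$ have the same absolute degree, and then \emph{gluing} matching positive and negative components in pairs. This removes the offending boundary circles while preserving the positive-immersion property on what remains. The covering-theoretic claim (that one can arrange equal degrees on all preimages of a given boundary geodesic) is the actual content of the lemma; once it is established, the gluing is routine. Your argument, as written, handles only the ``generic'' case the paper dismisses in its first sentence.
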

\begin{proof}
For ``generic'' chains $C_1$ and $C_2$ there is nothing to prove: the disjoint union of two
immersed surfaces is an immersed surface.
The issue is that there might be a conjugacy class $g$ in the support of both $C_1$ and $C_2$
whose coefficients have different signs. Let $\gamma$ be the geodesic in the free
homotopy class corresponding to $g$. Positive immersed surfaces with rational
boundary $C_1$ and $C_2$ might have boundary components mapping to $\gamma$ with
different degrees. The following claim shows that we can construct suitable covers
of these immersed surfaces such that the boundary components mapping to $\gamma$ can be
glued up.

\begin{claim}
Let $S$ be a connected, oriented surface with $\chi(S)<0$ and genus at least $1$. Let
$\delta_S \subset \partial S$ be a set of boundary
components, and let $f_S:\delta_S \to \gamma$ be an immersion,
such that the degree of $f_S$ on every component is positive.
Let $n_i$ be the degrees of $f_S$ on the components of $\delta_S$, and let $N$ be a 
common multiple of the $n_i$. Then there is a finite cover $\widehat{S}$
such that every component of the preimage $\widehat{\delta}_S$ maps to $\gamma$ with degree $N$.
\end{claim}
\begin{proof}
An orientable surface $S$ with genus at least $1$ admits a double cover $S'$ such that every
component of $\partial S$ has exactly two preimages in $S'$. Let $\delta_{S,i}$ be
the components of $\delta_S$, and let $n_i$ be the degrees of the map $f_T:\delta_{S,i} \to \gamma$.
Let $N$ be a common multiple of the $n_i$. Define a homomorphism $\phi:\pi_1(S') \to \Z/N\Z$
as follows. For each $i$, let $\epsilon_{i,1},\epsilon_{i,2}$ be the components of $\partial S'$
in the preimage of $\delta_{S,i}$, and let $n_i$ be the degree of $f_S:\delta_{S,i} \to \gamma$.
Then define $\phi(\epsilon_{i,1}) = n_i$ and $\phi(\epsilon_{i,2}) = -n_i$. Since
$\phi(\partial S') = 0$ in $\Z/N\Z$, the function $\phi$ extends to a (surjective)
homomorphism from $\pi_1(S')$ to $\Z/N\Z$. If $\widehat{S}$ is the cover corresponding to the kernel
of $\phi$, then every component of $\widehat{\delta}_S$, the preimage of $\delta_S$,
maps to $\gamma$ with degree $N$, as desired.
\end{proof}

Start with a pair of positive immersed surfaces with rational boundary $C_1$ and $C_2$.
Since the Euler characteristics of these surfaces are negative, they admit finite covers
with genus at least one. After passing to a suitable cover (provided by the claim), and gluing up
pairs of boundary components which map to geodesics $\gamma$ in the common support
of $C_1$ and $C_2$ with the same absolute degree but with
opposite signs, we obtain a positive immersed surface which $C$ rationally bounds.
\end{proof}

\begin{remark}
Compare with the proof of Thm.~3.4 of \cite{Calegari_surface}.
\end{remark}

We will give a much shorter proof of this Lemma (assuming more machinery)
in \S~\ref{rotation_subsection}
in the special case that the ambient surface $S$ has boundary.

\subsection{Rotation quasimorphism}\label{rotation_subsection}

Throughout this section we fix $S$, a compact oriented hyperbolic surface with boundary.

Let $\gamma$ be a homologically trivial geodesic in $S$. The geodesic $\gamma$
cuts $S$ up into connected regions $R_i$. For each $i$, let $\alpha_i$ be an oriented
arc from $\partial S$ to $R_i$ which is transverse to $\gamma$, and let $n_i$
be the signed intersection of $\alpha_i$ with $\gamma$. Since $\gamma$ is
homologically trivial by hypothesis, $n_i$ does not depend on the choice of $\alpha_i$.
Geometrically, if $T$ is an oriented surface, and $f:T,\partial T \to S,\gamma$ is a smooth map, $n_i$
is a signed count of the preimages of a generic point in $R_i$.

\begin{definition}
The {\em algebraic area} enclosed by $\gamma$ is the sum
$$\area(\gamma) =  \sum n_i \cdot \area(R_i)$$
\end{definition}

The hyperbolic structure and the orientation on $S$ determines a discrete faithful
representation $\rho:\pi_1(S) \to \PSL(2,\R)$ unique up to conjugacy. Since $\pi_1(S)$
is free, this representation lifts to $\til{\rho}:\pi_1(S) \to \til{\SL}(2,\R)$, where
$\til{\SL}(2,\R)$ denotes the universal covering group of $\PSL(2,\R)$. The group $\PSL(2,\R)$
acts on the circle at infinity of hyperbolic space, and lets us think of
$\PSL(2,\R)$ as a subgroup of $\homeo^+(S^1)$. The covering group $\til{\SL}(2,\R)$ is
the preimage of $\PSL(2,\R)$ in $\homeo^+(\R)$.

\begin{definition}
Given $g \in \til{\SL}(2,\R)$, the {\em rotation number} of $g$, as defined by
Poincar\'e, is the limit
$$\rot(g) = \lim_{n \to \infty} \frac {g^n(0)} n$$
where we think of $\til{\SL}(2,\R)$ as a subgroup of $\homeo^+(\R)$ under the
covering projection $\R \to S^1 = \R/\Z$.
\end{definition}

Rotation number pulls back by $\til{\rho}$ to define a function $\rot$ on $\pi_1(S)$.
As is well-known, $\rot$ is a homogeneous quasimorphism on $\pi_1(S)$ with $D(\rot)=1$.
As a function on $\pi_1(S)$, the function $\rot$ depends on the choice of lift
of $\rho$ to $\til{\rho}$. Different lifts are classified by elements of $H^1(S;\Z)$,
so $\rot$ is well-defined on $\pi_1(S)$ modulo elements of $H^1(S;\Z)$, and therefore
well-defined on the commutator subgroup of $\pi_1(S)$ independent of the choice of
$\til{\rho}$. Though it appears to depend on the choice of hyperbolic structure on $S$, it depends
only on the topology of $S$. If we need to stress the dependence of $\rot$ on $S$,
we write it $\rot_S$.

\begin{lemma}[Area is rotation number]\label{area_rotation_number}
If $g \in \pi_1(S)$, and $\gamma$ is a geodesic in $S$ corresponding to the conjugacy
class of $g$, there is an equality
$$\area(\gamma) = 2\pi \cdot \rot_S(g)$$
\end{lemma}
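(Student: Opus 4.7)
The plan is to prove both sides equal $\int_\Sigma f^*\omega$, where $\omega$ is the hyperbolic area $2$-form on $S$ and $f \colon (\Sigma,\partial\Sigma) \to (S,\gamma)$ is a surface filling $\gamma$. Since $\area(\gamma)$ is defined only for $\gamma$ homologically trivial and $\pi_1(S)$ is free, $g$ must lie in $[\pi_1(S),\pi_1(S)]$; writing $g = \prod_{i=1}^k [a_i,b_i]$ produces such an $f$ with $\Sigma$ a compact oriented genus-$k$ surface with one boundary component, and $f|_{\partial\Sigma}$ covering $\gamma$ with degree one. Both $\area(\gamma)$ and $2\pi\,\rot_S(g)$ are homogeneous of degree one under $g \mapsto g^n$, so fixing such an $f$ loses no generality.

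First I would verify $\int_\Sigma f^*\omega = \area(\gamma)$ by degree theory: the signed preimage count of a generic point in $R_i$ under $f$ equals $n_i$. Placing the generic point at the endpoint of the arc $\alpha_i$ and tracking preimages of $\alpha_i$ in $\Sigma$ reduces this count to the algebraic intersection of $f^{-1}(\alpha_i)$ with $\partial\Sigma$, which is $n_i$ by the very definition of $n_i$. Integrating $\omega$ against this degree function then gives $\sum_i n_i\,\area(R_i) = \area(\gamma)$.

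Next I would verify $\int_\Sigma f^*\omega = 2\pi\,\rot_S(g)$ by identifying two representatives of the Euler class. By Gauss--Bonnet, $\omega/(2\pi)$ is a de Rham representative of the Euler class $e(TS) \in H^2(S;\R)$. On the other hand, the central extension $\Z \to \til\SL(2,\R) \to \PSL(2,\R)$ classifies the Euler class of the $\PSL(2,\R)$-action on $S^1$, and $\delta\rot$ is a bounded group $2$-cocycle representing this class; pulling back by $\rho$ yields a group-cohomology representative $\delta\rot_S$ of $e(TS)$ in $H^2(\pi_1(S);\R)$. Evaluating either representative on the relative fundamental class $f_*[\Sigma,\partial\Sigma]$ yields the same real number: on the de Rham side one obtains $\frac{1}{2\pi}\int_\Sigma f^*\omega$, while on the group-cohomology side, telescoping the cocycle identity along $g = \prod[a_i,b_i]$ yields $\rot_S(g)$.

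The hardest step is the identification of these two representatives. I would carry it out by passing to the universal cover: choose a $1$-form $\alpha$ on $\mathbb{H}^2$ with $d\alpha = \omega$ (for instance, a connection form on the unit tangent bundle $\til\SL(2,\R) \to \mathbb{H}^2$ has this property), lift $f$ to $\tilde f$ on the universal cover of $\Sigma$, and apply Stokes' theorem to rewrite $\int_\Sigma f^*\omega$ as a sum of boundary integrals of $\tilde f^*\alpha$ over the lifted components of $\partial\Sigma$. These boundary integrals can then be recognized as translation numbers of $\til\rho(g)$ acting on $\R$ under the covering $\R \to S^1$, which is precisely the data defining $\rot_S(g)$.
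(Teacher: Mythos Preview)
Your proposal is correct and follows essentially the same approach as the paper. The paper does not give a self-contained proof but cites \cite{Calegari_scl}, Lem.~4.58, and offers exactly the sketch you flesh out: the coboundary $\delta\rot_S$ represents the (relative) Euler class, Gauss--Bonnet identifies $\omega/2\pi$ with the same class, and evaluating on a filling surface yields the equality.
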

This is proved in \cite{Calegari_scl}, Lem.~4.58. For the expert, the lemma follows
from the fact that the coboundary of the rotation quasimorphism 
is the (relative) Euler class, and the Gauss-Bonnet theorem.

Note that for every
nontrivial $g \in \pi_1(S)$, the element $\rho(g)$ is hyperbolic in $\PSL(2,\R)$,
and therefore fixes two points in the circle at infinity. It follows that
$\rot(g)$ is an {\em integer}. An explicit formula for $\rot(g)$, in terms
of an expression of $g$ as a reduced word in a standard generating set for
$\pi_1(S)$, is given in \cite{Calegari_scl}, Thm.~4.62.

The functions $\area$ and $\rot$ extend by linearity and continuity 
to elements in $B_1^H(\pi_1(S);\R)$. This is obvious for the function $\rot$, and
straightforward for $\area$: if $C = \sum_i t_i g_i$ where the $t_i$ are real numbers
and the $g_i$ are primitive conjugacy classes, let $\gamma_i$ be oriented geodesics in $S$ in the
conjugacy classes of the $g_i$. The $\gamma_i$ cut up $S$ into regions $R_j$. For each $j$,
let $\alpha$ be an arc from $\partial S$ to $R_j$ transverse to every $\gamma_i$, and let
$s_j = \sum_i t_i (\alpha \cap \gamma_i)$ where $\cap$ denotes signed intersection number.
Then $\area(C) = \sum_j s_j \cdot \area(R_j)$.

\vskip 12pt

We can now give necessary and sufficient conditions for a rational chain in a hyperbolic
surface to rationally bound an immersed subsurface.

\begin{proposition}\label{rot_is_scl}
Let $S$ be a compact, connected, oriented hyperbolic surface with geodesic boundary.
A rational chain $C$ in $B_1^H(S)$ rationally bounds a positive immersed subsurface if and only if
$$\scl(C) = \rot_S(C)/2$$
\end{proposition}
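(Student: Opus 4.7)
My plan is to prove the two implications separately. The forward direction is an explicit Gauss--Bonnet calculation on the immersed surface combined with Bavard duality. The reverse direction is harder: it requires the existence of extremal surfaces for rational chains in free groups (from \cite{Calegari_pickle}) plus an area saturation argument that converts the numerical equality $\scl(C) = \rot_S(C)/2$ into the geometric statement of positivity.

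For the forward direction, suppose $f: T \to S$ is a positive immersion rationally bounding $C$ with covering degree $n$. Sphere components of $T$ can be discarded, and disk components are impossible because the boundary of such a disk would be a null-homotopic closed geodesic in $S$, which does not exist. Hence $\chi^-(T) = \chi(T)$. The pullback of the hyperbolic metric on $S$ makes $T$ a hyperbolic surface with geodesic boundary, so by Gauss--Bonnet $\area(T) = -2\pi\chi(T)$. Since $f$ is orientation-preserving, $\int_T f^*\omega_S = \area(T)$; this integral also computes the algebraic area enclosed by $n$ copies of the geodesic representative of $C$, which by Lemma~\ref{area_rotation_number} equals $2\pi n\rot_S(C)$. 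Combining yields $-\chi(T)/(2n) = \rot_S(C)/2$, hence $\scl(C) \le \rot_S(C)/2$. Bavard duality (Theorem~\ref{bavard_duality_theorem}) applied to the quasimorphism $\rot_S$, whose defect is $1$, gives the matching lower bound.

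For the reverse direction, suppose $\scl(C) = \rot_S(C)/2$, and pick an extremal surface $f: T \to S$ for $C$ supplied by \cite{Calegari_pickle}, so $-\chi^-(T)/(2n) = \scl(C)$, where $n$ is the covering degree of $\partial f$. After discarding sphere components and using that extremal surfaces are $\pi_1$-injective (\cite{Calegari_scl}, Prop.~2.96), I would homotope $f$ to a pleated representative in which each $2$-cell is a totally geodesic hyperbolic triangle. The resulting hyperbolic structure on $T$ has geodesic boundary and satisfies $\area(T) = -2\pi\chi(T)$, and on each $2$-cell $c$ the restriction $f|_c$ is an isometry contributing $\pm\area(c)$ to $\int_T f^*\omega_S$ according to orientation, so $\int_T f^*\omega_S \le \area(T)$ with equality iff $f$ is orientation-preserving on every $2$-cell, i.e.\ a positive immersion. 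On the other hand,
\[
\int_T f^*\omega_S \;=\; 2\pi n\rot_S(C) \;=\; -2\pi\chi^-(T) \;\ge\; -2\pi\chi(T) \;=\; \area(T),
\]
using Lemma~\ref{area_rotation_number}, the extremality identity, and $\chi^-(T) \le \chi(T)$. These two inequalities sandwich to equality, forcing $f$ to be a positive immersion and simultaneously $\chi^-(T) = \chi(T)$.

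The main obstacle is precisely the saturation step in the reverse direction: upgrading the scalar equality $\scl(C) = \rot_S(C)/2$ to pointwise orientation-preservation of $f$ on each $2$-cell. This hinges on the pair of inequalities above being simultaneously tight, which relies on two ingredients that need care: the pleated-surface replacement (justified by $\pi_1$-injectivity of extremal surfaces) and the existence of an extremal surface in the first place (supplied by \cite{Calegari_pickle}).
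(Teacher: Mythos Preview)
Your proof is correct and follows essentially the same approach as the paper: both directions rest on the pleated-surface area inequality $\int_T f^*\omega_S \le \area(T)$ with equality iff $f$ is a positive immersion, combined with Gauss--Bonnet, Lemma~\ref{area_rotation_number}, and (for the reverse direction) the existence of extremal surfaces from \cite{Calegari_pickle}. Your write-up is more explicit than the paper's terse version---in particular you spell out the sandwich argument and the role of Bavard duality in the forward direction---but the underlying strategy is identical.
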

\begin{proof}
Given $C \in B_1^H$ and $f:T \to S$ a surface that it rationally bounds,
we can replace $T$ by a pleated surface (see e.g. Thurston \cite{Thurston_notes} Ch.~8 for an introduction to
the theory of pleated surfaces) and observe that the hyperbolic area of $T$ is at least
as big as $|\area(C)|$, with equality if and only if the map $f$ is an immersion
(note that if the original map was already an immersion, then so is its pleated representative).
It follows that an immersed surface is {\em extremal}, and therefore
by Theorem~\ref{bavard_duality_theorem}, if a chain $C$ bounds a positive immersed
surface, then $\scl(C) = \area(C)/4\pi$ by Gauss-Bonnet.

Conversely, since $S$ has boundary (by assumption) and therefore $\pi_1(S)$ is free,
any rational chain $C$ admits an extremal surface, by
the Rationality Theorem from \cite{Calegari_pickle} (or see \cite{Calegari_scl}, Thm.~4.18). 
Hence $\scl = \area(C)/4\pi$ if and only if $C$ bounds a positive immersed surface. By Lemma~\ref{area_rotation_number}
there is an equality $\area(C)/4\pi = \rot_S(C)/2$.
\end{proof}

In particular, the chain $\partial S$
satisfies
$$\scl(\partial S) = \area(S)/4\pi = -\chi(S)/2$$
Hence the surface $S$ itself is an extremal surface for $\partial S$.

\begin{remark}
We can use this fact to give a very short proof of Lemma~\ref{addition_is_transitive}
in the case that the ambient surface $S$ has boundary. A rational chain $C$ in $S$ rationally bounds
a positive immersed surface if and only if $\rot_S$ is extremal for $C$, i.e. if
$\scl(C) = \rot(C)/2$. If $C_1$ and $C_2$ rationally bound positive immersed surfaces,
then 
$$\scl(C_1 + C_2) \le \scl(C_1) + \scl(C_2) = \rot(C_1)/2 + \rot(C_2)/2 = \rot(C_1 + C_2)/2$$
Hence $\rot$ is extremal for $C_1 + C_2$, and therefore $C_1 + C_2$ rationally bounds
a positive immersed surface. Of course this proof is not ``really'' short, since it uses
the (highly nontrivial) fact that every rational chain in a free group bounds an extremal
surface.
\end{remark}

\begin{example}
Let $S$ be a once-punctured torus, with standard generators $a,b$. Let $w \in \pi_1(S)$
be a nontrivial commutator, and let $\gamma$ be the associated geodesic in $S$ (necessarily primitive).
It is a fact that in a free group, the ``standard'' once-punctured torus whose boundary is a given
nontrivial commutator is always extremal. When does $\gamma$ bound an immersed surface?
A description of $w$ as a cyclically reduced word in $a,b,a^{-1},b^{-1}$ determines a polygonal loop
$P_w$ in $\R^2$ with vertices contained in the integer lattice, as follows. Start at the origin,
and read the letters of $w$ one by one. On reading $a$ (resp. $a^{-1}$), take one step to the right
(resp. left), and on reading $b$ (resp. $b^{-1}$), take one step up (resp. down). The
polygonal loop $P_w$ can be ``smoothed'' by rounding the corners where a horizontal and a vertical
arc of $P_w$ meet, giving rise to an immersed loop $p_w$. It turns out that $\gamma$ bounds an
immersed surface in $S$ if and only if the winding number of $p_w$ is $\pm 1$. So for example,
$[a^n,b^m]$ bounds an immersed surface when $n,m \ne 0$, 
but $[a,ba^{-1}b^{-1}]$ does not. For details see \cite{Calegari_scl}, \S~4.2.
\end{example}

\subsection{Immersion theorem}\label{immersion_theorem_subsection}

We now prove our main technical result (Theorem~C) and deduce Theorem~A and Theorem~B as corollaries.

\begin{immerse_thm}
Let $S$ be a compact, connected orientable surface with $\chi(S)<0$, and $C = \sum r_i g_i$ a finite
rational chain in $B_1^H(\pi_1(S))$. Then for all sufficiently large rational
numbers $R$ (depending on $C$), the geodesic $1$-manifold in $S$ corresponding to the
chain $R\partial S + \sum r_i g_i$ rationally bounds
an immersed surface $f:T \to S$.
\end{immerse_thm}
\begin{proof}
Since $\partial S$ bounds the immersed surface $S$, it suffices to prove the theorem for
a particular positive $R$ (depending on $C$). 

Multiply through by a large positive integer to clear denominators, so we can assume
the $r_i$ are all integers. Also, by replacing $g_i$ with $g_i^{-1}$ if necessary, we
can assume the $r_i$ are all positive.
Pick a hyperbolic structure on $S$, and let $\gamma_i$ be the geodesic loop 
corresponding to the conjugacy class of $g_i$. 
By Scott (LERF for surface groups, \cite{Scott}) 
there is a finite cover of $S$ in which every component of the preimage of
each $\gamma_i$ is embedded. In other words, there is a finite cover
$\til{S} \to S$, so that if $\til{C}$ is the union in $\til{S}$ of all preimages of 
all components of $C$, then every component of $\til{C}$ is embedded (although the
union typically will not be).
The composition of a positive immersion with a covering
map is a positive immersion, so it suffices to construct the positive immersion in $\til{S}$. 
Hence without loss of generality we can assume that we are working in the cover, and
every individual geodesic $\gamma_i$ is embedded (though of course the union will typically not be).

Let $\alpha_1,\beta_1,\cdots,\alpha_g,\beta_g$ be a standard system of embedded geodesics which
are a standard basis for $H_1(S;\Z)/H_1(\partial S;\Z)$. 
For each $\gamma_i$ let $a_{i,j},b_{i,j}$ be integers such that
$$[\gamma_i] = \sum_j a_{i,j}[\alpha_j] + b_{i,j}[\beta_j] - D_i$$ in homology,
where $D_i$ is in the image of $H_1(\partial S;\Z)$.
Since the entire boundary $\partial S$ is homologically trivial, the class $D_i$
is represented (in many different ways) as a positive sum of positively oriented boundary
components of $S$.

The logic of the remainder of the argument is as follows. We will show that
for each $i$, the chain 
$$C_i:=\gamma_i - \biggl( \sum_j a_{i,j}\alpha_j - b_{i,j}\beta_j \biggr) + \partial_i$$
bounds a positive immersed surface, where $\partial_i$ is a positive sum of
positively oriented boundary components of $S$. 
This will be enough to prove the theorem.
For, since $C$ is homologically trivial, we have 
$$\sum r_i C_i = \sum_i r_i \gamma_i + R\partial S = C + R\partial S$$
Hence by Lemma~\ref{addition_is_transitive}, the chain $C + R\partial S$ bounds a positive
immersed surface for sufficiently big $R$, as claimed.

\vskip 12pt

First, decompose $S$ along a union $\delta$ of embedded separating geodesics
into a union of genus one subsurfaces $S_k$ such that
$\alpha_k,\beta_k$ is a standard basis for $H_1(S_k)/H_1(\partial S_k)$. 
Hence (in particular), for each $k$, the geodesics $\alpha_k,\beta_k$ are simple,
and intersect transversely in one point. The first step is to replace each $\gamma_i$ by
a union $\gamma_i''$ of geodesics, each of which is embedded and contained in
a single subsurface $S_k$.

\begin{claim}
For each $\gamma_i$ there is a chain $\gamma_i - \gamma_i'' + \partial_i$ which
bounds a positive immersed surface, where $\gamma_i''$ is a positive sum of embedded
geodesics disjoint from $\delta$, and $\partial_i$ is a positive sum of positively
oriented boundary components of $\partial S$. 
\end{claim}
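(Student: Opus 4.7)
The plan is to construct $T$ by cutting $\gamma_i$ into arcs along $\delta$, reassembling them into closed curves inside each piece $S_k$, and then straightening and taking a further cover to ensure embeddedness.

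Label the transverse intersections of $\gamma_i$ with $\delta$ cyclically as $p_1,\ldots,p_n$, and let $a_j$ be the arc of $\gamma_i$ from $p_j$ to $p_{j+1}$ (indices mod $n$); each $a_j$ is contained in some piece $S_{k(j)}$. For each $j$, close $a_j$ to a loop $\bar a_j$ inside $S_{k(j)}$ by appending a short arc along $\partial S_{k(j)}$ from $p_{j+1}$ back to $p_j$. Build $T$ by taking a thin ribbon $R_j$ around each $a_j$ in $S_{k(j)}$, with $a_j$ on one boundary and a pushed-in parallel copy on the other. At each $p_j$, glue $R_{j-1}$ to $R_j$ along short arcs of $\delta$ chosen by the oriented resolution of the crossing (the resolution determined by the orientations of $\gamma_i$, $\delta$, and $S$). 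The result is an oriented surface immersed in $S$ whose boundary has one component running once around $\gamma_i$ positively, while the remaining boundary consists of closed curves (each isotopic in $S_{k(j)}$ to $\bar a_j$) together with some arcs of $\delta$ joining them.

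To turn the $\bar a_j$ into a positive sum of embedded geodesics, first straighten each $\bar a_j$ to its geodesic representative in $S_{k(j)}$ via an immersed annulus; then, if any straightened curve fails to be embedded, pass to a further finite cover provided by Scott's LERF theorem (exactly as used in the preceding paragraph for the $\gamma_i$ themselves) in which every preimage is embedded. Composition with the covering map preserves positive immersion, at the cost of replacing the boundary chain by a positive integer multiple, which is harmless for the statement of the claim.

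The remaining boundary of $T$ consisting of arcs of $\delta$ must be capped off by a positive sum of components of $\partial S$: this uses the fact that each component of $\delta$ is separating, so one side of it in $S$ is a subsurface whose only other boundary lies in $\partial S$. Adjoining suitable copies of these subsurfaces to $T$ replaces the $\delta$-portion of $\partial T$ with a positive combination of boundary components of $\partial S$, giving the desired chain $\gamma_i - \gamma_i'' + \partial_i$. The main obstacle is precisely tracking orientations so that this capping produces only positively oriented components of $\partial S$, rather than a cancelling mixture; the separating property of each component of $\delta$ and the careful choice of oriented resolution at each $p_j$ are exactly what guarantee this.
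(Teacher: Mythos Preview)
Your overall strategy --- resolve $\gamma_i$ along $\delta$ into curves confined to the pieces $S_k$, and cobound the difference with a positive immersed surface --- is the same as the paper's, but two steps in your execution do not go through.

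First, the ribbon construction of $T$ is not precise enough to establish that $T$ is a \emph{positive} immersed surface. At each crossing $p_j$ there are two ways to resolve, and only the one that places the attaching band on the positive side of $\gamma_i$ yields a positively embedded $1$-handle; the other side gives a surface mapping with the wrong orientation. You acknowledge this in the final sentence but do not verify it, and in fact it cannot be verified in your setup: when you glue the ribbons $R_{j-1}$ and $R_j$ ``along short arcs of $\delta$'', nothing forces those arcs to lie on the positive side. Relatedly, your description of the non-$\gamma_i$ boundary of $T$ is inconsistent (closed curves isotopic to $\bar a_j$, versus curves made of pushed-in arcs joined by arcs of $\delta$), and the definition of $\bar a_j$ itself breaks down when $p_j$ and $p_{j+1}$ lie on different components of $\partial S_{k(j)}$, since there is then no arc of $\partial S_{k(j)}$ joining them.

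Second, the LERF step does not do what you need. The claim asks for $\gamma_i''$ to consist of embedded geodesics \emph{in $S$}. Passing to a cover where a straightened curve becomes embedded and then composing with the covering projection returns you to $S$ with the original non-embedded geodesic as boundary. In fact the appeal to LERF is unnecessary: in a hyperbolic surface, the geodesic representative of any curve isotopic to a simple closed curve is itself simple. So if your construction really produced embedded curves, their geodesic straightenings would already be embedded; the invocation of LERF is a symptom that the construction has not been pinned down.

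The paper's argument avoids both problems by working one handle at a time. Starting from an extreme piece $S_k$ (one meeting $\delta$ in a single component), one locates an arc $\mu$ of $\delta$ lying on the \emph{positive} side of $\gamma_i$ with interior disjoint from $\gamma_i$ (there are two combinatorial cases), and attaches a single $1$-handle with core $\mu$ to an annular neighborhood of $\gamma_i$. This is manifestly a positive embedded surface; its outer boundary $r(\gamma_i)$ is isotopic to an embedded $1$-manifold, hence has embedded geodesic representatives, and has at least two fewer intersections with $\delta$. Inducting on intersection number finishes the claim, with components isotopic into $\delta$ traded for components of $\partial S$ via the embedded subsurfaces they cobound.
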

We remark that either or both of $\gamma_i'',\partial_i$ might be empty in the statement of
the claim above.
\begin{proof}
If $\gamma_i$ is a component of $\delta$, then $\gamma_i$ cobounds a positive immersed
surface (in fact, an embedded subsurface of $S$)
together with some components of $\partial S$. After adding a sufficiently big multiple
of $\partial S$ we obtain a chain of the form $\gamma_i + \partial_i$ as above which bounds
an immersed positive surface, and we are done in this case.

Otherwise, $\gamma_i \cap \delta$ 
is in general position. The intersection $\gamma_i \cap S_k$ consists of a collection of arcs or a single
embedded loop. Moreover, since $\gamma_i$ and $\delta$ are geodesic, every arc
of $\gamma_i \cap S_k$ is essential. Among the components $S_k$, there are two which
intersect $\delta$ each in a single component. Let $S_k$ be one such. Every
arc of $\gamma_i \cap S_k$ has endpoints on this single component of $\delta$. There are
two possibilities (not necessarily mutually exclusive):
\begin{enumerate}
\item{There is an arc $\nu$ of $\gamma_i \cap \delta$ and an arc $\mu$ of $\delta$ on the
positive side of $\nu$ such that the interior of $\mu$ is disjoint from $\gamma_i$, and
$\partial \mu = \partial \nu$.}
\item{There are arcs $\nu,\nu'$ of $\gamma_i \cap \delta$ and arcs $\mu,\mu'$ of
$\delta$ on the positive sides of $\nu,\nu'$ such that the interiors of $\mu,\mu'$
are disjoint from $\gamma_i$, and $\partial \mu \cup \partial \mu' = \partial \nu \cup \partial \nu'$.}
\end{enumerate}
In the first case we build a positive immersed surface with one boundary component on
$\gamma_i$ by attaching a $1$-handle whose core is $\mu$. In the second case we build a
positive immersed surface with one boundary component on $\gamma_i$ by attaching a $1$-handle
whose core is one of $\mu,\mu'$. See Figure~\ref{normal_sum_figure}. 

\begin{figure}[htpb]
\labellist
\small\hair 2pt
\pinlabel $\nu$ at -20 75
\pinlabel $\mu$ at 80 75
\pinlabel $\delta$ at 50 140
\pinlabel $\text{resolve}$ at 190 95
\pinlabel $\nu$ at 540 95
\pinlabel $\nu'$ at 600 75
\pinlabel $\mu$ at 637 110
\pinlabel $\mu'$ at 640 40
\pinlabel $\delta$ at 610 145
\pinlabel $\text{resolve}$ at 755 95
\endlabellist
\centering
\includegraphics[scale=0.35]{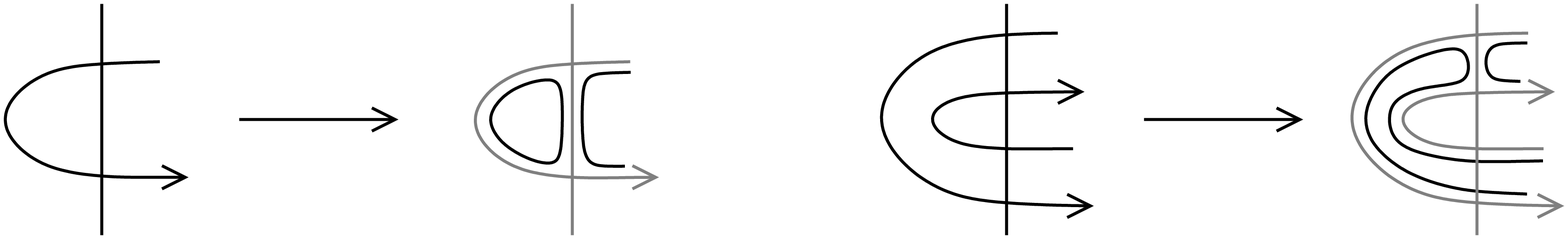}
\caption{The $1$-manifolds $\gamma_i$ and $-\gamma_i'$ cobound a 
positive immersed surface, obtained from $\gamma_i$ by attaching a $1$-handle whose core is $\mu$} \label{normal_sum_figure}
\end{figure}

The result of attaching a $1$-handle with core $\mu$ to a product neighborhood of $\gamma_i$
produces an embedded positive surface. If some component of the boundary of this surface is
homotopically trivial, it is necessarily trivial on the positive side, so we cap it off with an
embedded disk. Straighten the resulting surface by an isotopy until its boundary
components are geodesic. This produces a new geodesic
$1$-manifold $r(\gamma_i)$ called a {\em resolution} of $\gamma_i$
whose components are all embedded, such that $\gamma_i - r(\gamma_i)$
bounds a positive immersed surface, and such that $r(\gamma_i)$ has at least
two fewer intersections with $\delta$ than $\gamma_i$ does. In particular, it is certainly
true that each component of $r(\gamma_i)$ has fewer intersections with $\delta$ than
$\gamma_i$ does.

We resolve {\em the components} of $r(\gamma_i)$ in exactly the same way that we resolved
$\gamma_i$ above, and so on, inductively. If some of the resulting components are (isotopic to)
elements of $\delta$, they cobound an immersed positive surface with some positive
sum of positively oriented boundary components of $\partial S$ as above.

After finitely many steps, we obtain chains
$\gamma_i'$ and $\partial_i'$, where $\gamma_i'$ is a positive sum of embedded geodesics,
each disjoint from $\partial S_k$, where $\partial_i'$ is a positive sum of positively
oriented boundary components of $\partial S$, such that
$\gamma_i - \gamma_i' + \partial_i'$ bounds a positive immersed surface.

Now, let $S_{k'}$ be a component of $S-\delta$ adjacent to $S_k$. By construction, each
component of $\gamma_i'$ intersects at most one component of $\delta$ in $\partial S_{k'}$.
So the components of the $\gamma_i'$ can be iteratively resolved by the method above. By induction,
we end up (finally) with a chain $\gamma_i - \gamma_i'' + \partial_i$ of the desired form,
proving the claim.
\end{proof}

Let $\epsilon$ be a component of $\gamma_i''$ as in the claim. Let $S_k$ be the
component of $S-\delta$ containing $\epsilon$, and let $S'$ be obtained from $S_k$ by filling
in all but exactly one boundary component. For notational simplicity, denote the
geodesics $\alpha_k,\beta_k$ by $\alpha,\beta$ respectively, and let $a,b$ be integers such that 
$[\epsilon] = a[\alpha] + b[\beta]$ in $H_1(S')$.

Since $\epsilon$ is a geodesic on $S_k$ but not in $S'$, some component
of $S' - \epsilon \cup \alpha \cup \beta$
might be a bigon which contains one, or several components of $S' - S_k$.

By pushing $\epsilon$ repeatedly over such components of $S' - S_k$ we can eliminate such
bigons, innermost first. If $\epsilon'$ is the geodesic in $S_k$ obtained from $\epsilon$
by pushing $\epsilon$ over one component $\partial_i$ of $\partial S_k$, then $\epsilon$ and
$\epsilon'$ are disjoint, and either $\epsilon - \epsilon' + \partial_i$ or 
$\epsilon - \epsilon' + (\partial S_k - \partial_i)$ bounds a positive embedded subsurface of
$S_k$. By gluing up finitely many such surfaces, we obtain a geodesic $\epsilon''$ in $S_k$,
such that $\epsilon - \epsilon''$ plus some union of boundary components of $S_k$
bounds a positive immersed surface in $S_k$, and such that $\epsilon'' \cup \alpha \cup \beta$
is in the isotopy class of a configuration of geodesics for some hyperbolic structure on $S'$.
 
In this way we are reduced to arguing about embedded curves in a once-punctured torus.
We will show that $\epsilon'' - a\alpha - b \beta + n\partial S'$
rationally bounds a positive immersed surface in $S'$. If we can find such an immersed surface,
then by drilling out the components of $S'-S_k$ we will obtain a positive immersed surface in $S_k$ bounded by
$\epsilon'' - a\alpha - b\beta + \partial_\epsilon''$ for some suitable $\partial_\epsilon''$ which
is a positive sum of boundary components of $\partial S_k$.

By induction, and the (well-known) classification of simple curves in a once-punctured torus,
it suffices to show that the chain $ab - a - b + n[a,b]$ bounds an 
immersed surface in the once-punctured torus for sufficiently large $|n|$. 
In fact, this turns out to be true for $|n| \ge 2$.
Since the algebraic area in the once-punctured torus enclosed by $ab-a-b$ is $0$,
it suffices (by Lemma~\ref{area_rotation_number} and Proposition~\ref{rot_is_scl})
to show that $\scl(2[a,b] \pm (ab - a - b)) = \scl(2[a,b]) = 1$ which can
be verified by calculation, e.g. using {\tt scallop} (also see Figure~\ref{extremal_figure} below).

Applying Lemma~\ref{addition_is_transitive}, we conclude that
$\epsilon - a\alpha - b \beta + \partial_\epsilon$ rationally
bounds a positive immersed surface, where $\partial_\epsilon$ is some positive sum of
boundary components of $\partial S_k$. By adding on sufficient copies of $S-S_k$ we
obtain a positive immersed surface with boundary $\epsilon - a\alpha - b \beta + D_\epsilon$,
where $D_\epsilon$ is a positive sum of boundary components of $S$. But $\epsilon$ is
an arbitrary component of $\gamma_i''$. By the claim and Lemma~\ref{addition_is_transitive},
we obtain a positive immersed
surface with rational boundary $\gamma_i - \sum_j a_{i,j}\alpha_j - b_{i,j}\beta_j + \partial_i$
for suitable $\partial_i$. Since $i$ was arbitrary, this proves the theorem.
\end{proof}

\begin{remark}
Notice that we do {\em not} assume that $\partial S$ is nonempty, just that $\chi(S)<0$.
It is only the subsurfaces $S_k$ which are required to have nonempty boundary, which
will be the case, since each $S_k$ has genus $1$, and $\chi(S)<0$ implies that the genus
of $S$ is at least $2$.
\end{remark}

By the results of \S~\ref{rotation_subsection} and \S~\ref{duality_subsection} we conclude:

\begin{face_thm}
Let $F$ be a free group, and let $S$ be a compact, connected, orientable surface with
$\chi(S)<0$ and $\pi_1(S) = F$. Let $\partial S \in B_1^H(F;\R)$ be the $1$-chain represented
by the boundary of $S$, thought of as a finite formal sum of conjugacy classes in $F$.
Then the projective ray in $B_1^H(F;\R)$ spanned by $\partial S$ intersects the
unit ball of the $\scl$ norm in the interior of a face of codimension one in $B_1^H(F;\R)$.
\end{face_thm}

\begin{rot_thm}
Let $F$ be a free group, and let $S$ be a compact, connected, orientable surface
with $\chi(S)<0$ and $\pi_1(S) = F$. Let $\pi_S$ be the face of the $\scl$ unit norm ball
whose interior intersects the projective ray of the class $\partial S$. The face
$\pi_S$ is dual to the extremal homogeneous quasimorphism $\rot_S$.
\end{rot_thm}

\begin{remark}\label{explicit_surface_remark}
For the sake of completeness, we exhibit a positive immersed surface with rational
boundary $2[a,b] + a + b - ab$ in Figure~\ref{extremal_figure}.

\begin{figure}[htpb]
\labellist
\small\hair 2pt
\pinlabel $b$ at 60 210
\pinlabel $B$ at 60 162
\pinlabel $A$ at 58 123
\pinlabel $a$ at 58 78
\pinlabel $A$ at 135 200
\pinlabel $a$ at 155 153
\pinlabel $b$ at 155 130
\pinlabel $B$ at 135 85
\pinlabel $B$ at 230 162
\pinlabel $b$ at 225 210
\pinlabel $b$ at 215 249
\pinlabel $B$ at 202 295
\pinlabel $a$ at 400 260
\pinlabel $A$ at 375 217
\pinlabel $b$ at 330 207
\pinlabel $B$ at 340 162
\pinlabel $A$ at 340 123
\pinlabel $a$ at 330 78
\pinlabel $B$ at 375 69
\pinlabel $b$ at 400 30
\pinlabel $A$ at 230 123
\pinlabel $a$ at 225 79
\pinlabel $a$ at 215 35
\pinlabel $A$ at 202 -8
\endlabellist
\centering
\includegraphics[scale=0.35]{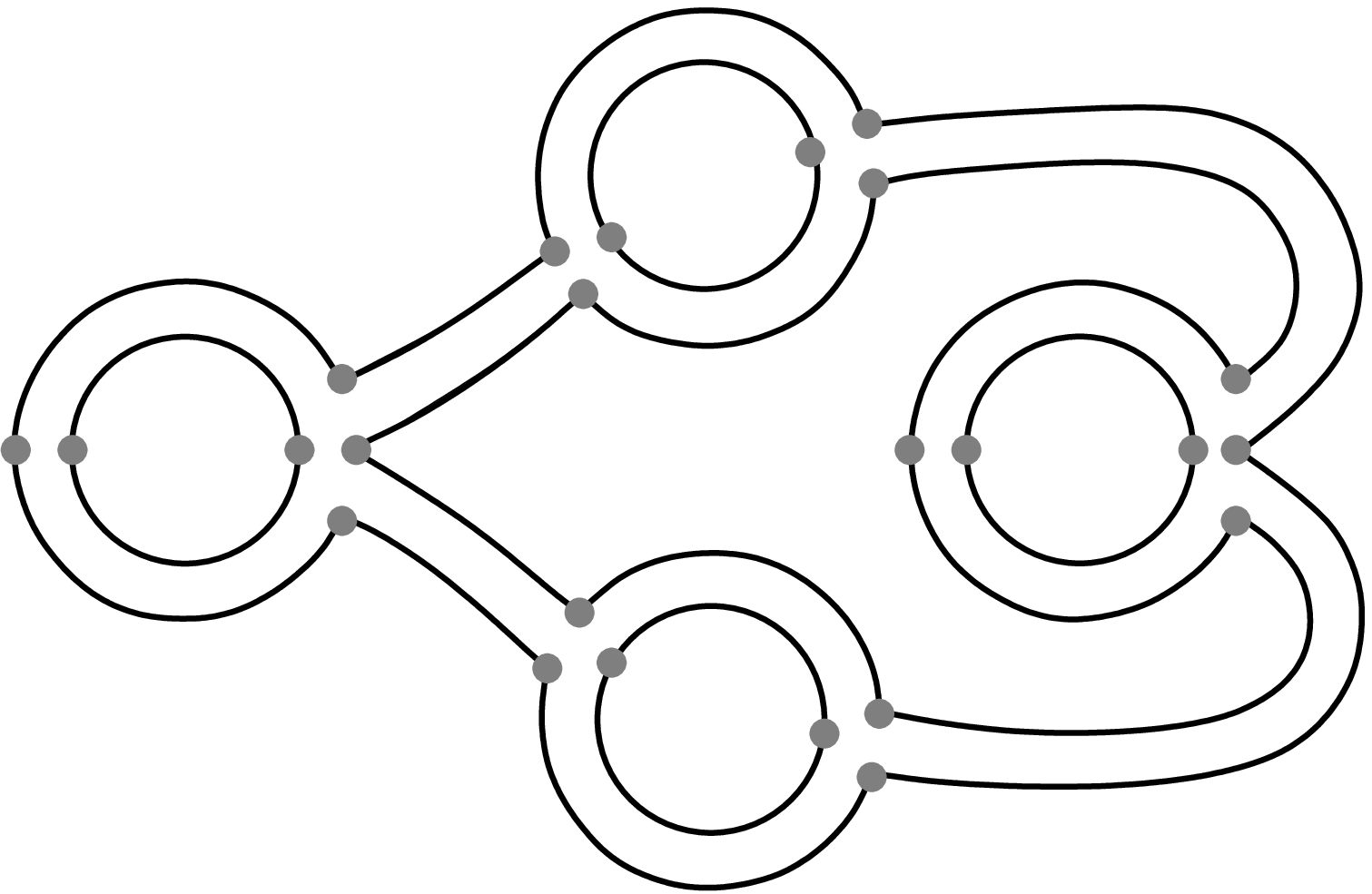}
\caption{A positive immersed surface with rational boundary the chain $2[a,b] + a + b - ab$} \label{extremal_figure}
\end{figure}
The figure depicts a genus $0$ surface $T$ with $6$ boundary components. The boundary components
are (cyclically) labeled by words in $F_2$ (for clarity, $A$ and $B$ are used in place of $a^{-1}$ and
$b^{-1}$). The components are decomposed into arcs each labeled by a letter, such that adjacent arcs
have opposite labels. It follows that there is a (unique) homotopy class of map $f:T \to S$
where $S$ is a once-punctured torus with standard generators $a,b$ for $\pi_1$ taking each boundary
component of $T$ to the geodesic in $S$ corresponding to the conjugacy class of the boundary label.
Two components of $\partial T$ each wrap twice around $[a,b]$ (the boundary of $S$). Two other components
of $T$ wrap once each around $(ab)^{-1}$. One component of $T$ wraps twice around $a$, and one
component wraps twice around $b$. Hence $\partial T$ represents the chain $2[a,b]^2 + a^2 + b^2 - 2ab$.
Since $-\chi(T)/2 = 2 = \scl(2[a,b]^2 + a^2 + b^2 - 2ab)$, the homotopy class of $f$ is represented by
an immersion.
\end{remark}

\subsection{Remarks and Corollaries}\label{remark_corollary_subsection}

In this section we collect some miscellaneous remarks and corollaries of our main theorems.
The first remark is that one can give a new proof of the relative version of
rigidity theorems of Goldman and Burger-Iozzi-Wienhard
(\cite{Goldman, Burger_Iozzi_Wienhard}; also compare Matsumoto \cite{Matsumoto}) about representations
of surface groups with maximal Euler class.

The context is as follows. Let $S$ be a compact oriented surface with boundary, and let
$\rho:\pi_1(S) \to \Sp(2n,\R)$ be a symplectic representation for which the conjugacy classes of
boundary elements fix a Lagrangian subspace. In this case, there is a well-defined relative
Euler class $\text{eu}_\rho$ in $H^2(S,\partial S;\Z)$ associated to $\rho$. It
is well-known in this context that $\text{eu}_\rho$ is a {\em bounded} cohomology class,
and satisfies $|\text{eu}_\rho([S])| \le -\chi(S)\cdot n$.

Our methods give a surprisingly short new proof of the following theorem 
(due to Goldman for $n=1$ and Burger-Iozzi-Wienhard for $n\ge 1$). For simplicity
we restrict to Zariski dense representations; this restriction can be removed by analyzing
various cases, but since
the main virtue of our alternate argument is its brevity, it is probably not worth spelling
out the details.

\begin{corollary}
Let $S$ be a compact oriented surface with boundary. Let
$\rho:\pi_1(S) \to \Sp(2n,\R)$ be Zariski dense, and suppose that boundary elements fix a Lagrangian
subspace (so that the relative Euler class $\text{eu}_\rho$ is defined). If
$\text{eu}_\rho([S])$ is maximal, $\rho$ is discrete.
\end{corollary}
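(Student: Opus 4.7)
The plan is to convert maximality of the relative Euler class into extremality of an associated quasimorphism on $\pi_1(S)$, apply Theorem~B to identify that quasimorphism with $\rot_S$ up to scale, and then extract discreteness from the resulting rigid identification of bounded Euler classes.

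First, I would pull back the bounded Euler class $e_b \in H^2_b(\Sp(2n,\R);\R)$ via $\rho$ to get $\rho^* e_b \in H^2_b(\pi_1(S);\R)$. Since $\pi_1(S)$ is free, $H^2(\pi_1(S);\R)=0$, so the exact sequence relating $Q$ and $H^2_b$ produces a unique homogeneous quasimorphism $\phi_\rho$ (modulo $H^1$) with $\delta\phi_\rho = \rho^* e_b$. The standard Maslov cocycle presentation of $e_b$ gives $D(\phi_\rho)\le n$. The hypothesis that boundary elements fix a Lagrangian is precisely what trivializes the pulled back Euler cocycle over $\partial S$ and thereby identifies the relative Euler pairing with the evaluation of $\phi_\rho$ on the boundary chain, giving $\text{eu}_\rho([S]) = \phi_\rho(\partial S)$.

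Combining $\scl(\partial S) = -\chi(S)/2$ with the maximality $|\text{eu}_\rho([S])| = -\chi(S)\cdot n$ then yields
\[
\frac{|\phi_\rho(\partial S)|}{2D(\phi_\rho)} \;\ge\; \frac{-\chi(S)\cdot n}{2n} \;=\; \scl(\partial S),
\]
so by Bavard duality (Theorem~\ref{bavard_duality_theorem}) the inequality is an equality and $\phi_\rho$ is extremal for $\partial S$ with $D(\phi_\rho)=n$. By Theorem~B, the face $\pi_S$ containing the projective class of $\partial S$ has $\rot_S$ as its unique dual extremal quasimorphism modulo $H^1$, so $\phi_\rho = n\cdot \rot_S + h$ for some homomorphism $h:\pi_1(S)\to \R$. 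In particular, the pullback $\rho^* e_b$ agrees in reduced bounded cohomology with $n$ times the pullback of the bounded Euler class along the discrete faithful representation $\pi_1(S)\to \PSL(2,\R)$ uniformizing $S$.

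The last step, and the main obstacle, is to upgrade this equality of bounded cohomology classes into discreteness of $\rho$. Here I would invoke the rigidity of bounded Euler classes for circle actions (in the spirit of Ghys and Matsumoto): agreement of pulled back bounded Euler classes forces $\rho$, viewed through the $\Sp(2n,\R)$-action on an appropriate flag variety (e.g.\ the Lagrangian Grassmannian, or the boundary circle via an embedded $\SL(2,\R)\hookrightarrow \Sp(2n,\R)$), to be semiconjugate to the Fuchsian action. Zariski density then eliminates the possibility that the image of $\rho$ lies in a proper closed subgroup of $\Sp(2n,\R)$, and in particular prevents the image from being dense in the Lie topology, leaving discreteness as the only alternative. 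The bulk of the work lies in this final semiconjugacy-to-discreteness step; everything before it is a direct application of Bavard duality together with Theorems A and B.
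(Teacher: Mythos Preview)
Your argument tracks the paper's proof exactly through the identification $\phi_\rho = n\cdot\rot_S + h$ with $h\in H^1$: the construction of $\phi_\rho$ from the bounded Euler class, the defect bound $D(\phi_\rho)\le n$, the equality $\phi_\rho(\partial S)=\text{eu}_\rho([S])$, extremality via Bavard duality, and the application of Theorem~B are all identical to what the paper does.

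Where you diverge is the final step, and here you are making it much harder than necessary and leaving a real gap. Your proposed route through Ghys--Matsumoto semiconjugacy is problematic: for $n>1$ the group $\Sp(2n,\R)$ does not act on a circle, so there is no semiconjugacy theorem to invoke directly, and even granting some analogue on the Lagrangian Grassmannian it is unclear how semiconjugacy to a Fuchsian action would force discreteness of a Zariski dense representation. You acknowledge this is ``the bulk of the work,'' but in fact it is unnecessary.

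The paper's endgame is elementary. The key observation you are missing is that $\rot_S$ is \emph{integer-valued} on all of $\pi_1(S)$, since every nontrivial element acts as a hyperbolic isometry under the Fuchsian representation. Hence on the commutator subgroup $G'$ (where $h$ vanishes) $\phi_\rho = n\cdot\rot_S$ takes values in $n\Z\subset\Z$. But $\phi_\rho(g)\bmod\Z$ is the symplectic rotation number of $\rho(g)$, so the symplectic rotation number vanishes identically on $\rho(G')$. This already shows $\rho(G')$ is not dense in $\Sp(2n,\R)$. Now use simplicity: a Zariski dense subgroup of a simple Lie group is either discrete or dense; if $\rho(G)$ were dense then the closure of $\rho(G')$ would be a proper normal subgroup of $\Sp(2n,\R)$, contradicting simplicity. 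Hence $\rho(G)$ is discrete. This replaces your entire semiconjugacy paragraph with two lines.
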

\begin{proof}
In what follows, denote $\pi_1(S)$ by $G$ and its commutator subgroup by $G'$.
Since $S$ has boundary, $\text{eu}_\rho = \delta \phi_\rho$ where $\phi_\rho$ is
a homogeneous quasimorphism on $G$, unique up to elements of $H^1(S;\R)$. For
each $g \in G$, the value of $\phi_\rho(g)$ mod $\Z$ is the {\em symplectic rotation number}
of $\rho(g)$. The symplectic rotation number lifts to a quasimorphism on the universal cover
of $\Sp(2n,\R)$ with defect $n$.
On the other hand, $|\phi_\rho(\partial S)| = |\text{eu}_\rho([S])| = -\chi(S)\cdot n$ 
so we can conclude that the defect of $\phi_\rho$ on $G$ is exactly $n$, and $\phi_\rho$
is extremal for $\partial S$. Hence by Theorem~B we conclude that the symplectic
rotation number of every element of $\rho(G')$ is {\em zero}, and therefore (in particular)
$\rho(G')$ is not dense in $\Sp(2n,\R)$. Since $\Sp(2n,\R)$ is simple, every Zariski dense
subgroup is either discrete or dense (in the ordinary sense). If $\rho(G)$ is dense,
then the closure of $\rho(G')$ is normal in $\Sp(2n,\R)$. But $\Sp(2n,\R)$ is simple, and
the closure of $\rho(G')$ is a proper subgroup; hence $\rho(G)$ is discrete.
\end{proof}

Bavard \cite{Bavard} asked whether $\scl$ takes values in $\frac 1 2 \Z$ in a free group.
Though this turns out not to be the case, nevertheless, elements with values in $\frac 1 2\Z$
are very common. Theorem~C gives a flexible method to construct many
elements in free groups with $\scl$ in $\frac 1 2 \Z$. For example,
from Lemma~\ref{addition_lemma} we conclude:

\begin{corollary}
Let $F_2$ denote the free group on two generators $a,b$ and let $F_3 = F_2 * \langle c \rangle$
be a free group on three generators. For any $w \in [F_2,F_2]$ and for all integers $n$ with $|n|$
sufficiently large (depending on $w$) there is an equality
$$\scl_{F_3}([a,b]^ncwc^{-1}) = \frac {|n+\rot(w)|+1} 2 \in \frac 1 2 \Z$$
\end{corollary}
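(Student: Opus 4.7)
The plan is to reduce the corollary to a chain-$\scl$ computation in $F_2$ via the Addition Lemma, then evaluate that $\scl$ using Theorem~C together with Proposition~\ref{rot_is_scl}. First, I would apply Lemma~\ref{addition_lemma} with $G = F_2$, $H = F_2 * \langle c \rangle = F_3$, $m = 2$, $g_1 = [a,b]^n$, $g_2 = w$, and $x_1 = c$ (both $g_1,g_2$ of infinite order, assuming $n \neq 0$ and $w \neq 1$, which we may harmlessly assume), to obtain
$$\scl_{F_3}([a,b]^n c w c^{-1}) \;=\; \scl_{F_2}([a,b]^n + w) + \tfrac{1}{2}.$$

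Next I would realize $F_2$ as $\pi_1(S)$ for $S$ a once-punctured torus, so that $[a,b]$ is the conjugacy class of $\partial S$ and the chain $[a,b]^n + w$ equals $n\partial S + w$ in $B_1^H(\pi_1(S))$. For large positive $n$, Theorem~C (applied with ambient chain $C = w$ and $R = n$) produces a positive immersed surface rationally bounded by $n\partial S + w$, and Proposition~\ref{rot_is_scl} then gives $\scl_{F_2}(n\partial S + w) = \rot_S(n\partial S + w)/2$. For large negative $n$, I would instead apply Theorem~C to the chain $-w$ with $R = -n > 0$ and use $\scl(-C) = \scl(C)$ to reach the same identity; this sign check is the only case analysis and is routine.

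To finish, I would evaluate $\rot_S$ on $n\partial S + w$. Taking the identity immersion $S \to S$ in Proposition~\ref{rot_is_scl} gives $\scl(\partial S) = -\chi(S)/2 = \tfrac{1}{2}$, which forces $\rot_S(\partial S) = 1$; linearity and the well-definedness of $\rot_S$ on $B_1^H(F_2)$ then yield $\rot_S(n\partial S + w) = n + \rot(w)$. Since $\scl \geq 0$, the identity must read $\scl_{F_2}(n\partial S + w) = |n + \rot(w)|/2$ for all $|n|$ sufficiently large, and substituting into the Addition Lemma produces the claimed formula. The fact that the answer lies in $\tfrac{1}{2}\Z$ follows from $\rot(w) \in \Z$, noted in \S\ref{rotation_subsection}. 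The substantive work here is entirely carried by Theorem~C; once it is in hand, the corollary is a few lines of bookkeeping, so there is no real obstacle beyond that already-proved theorem.
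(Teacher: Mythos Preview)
Your proposal is correct and is exactly the argument the paper intends: the paper itself merely writes ``from Lemma~\ref{addition_lemma} we conclude'' before stating the corollary, and you have spelled out precisely the details (Addition Lemma to pass from $F_3$ to a chain in $F_2$, Theorem~C plus Proposition~\ref{rot_is_scl} to identify $\scl$ with $\rot_S/2$ for $|n|$ large, and the computation $\rot_S(\partial S)=1$). The sign analysis for large negative $n$ and the observation that $\rot(w)\in\Z$ are handled correctly.
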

Obviously this construction can be varied considerably.

\begin{remark}
Computer experiments (using {\tt scallop}) suggest that for any $w \in [F_2,F_2]$, the geodesic
corresponding to the conjugacy class of $w[a,b]^n$ rationally bounds an immersed surface for sufficiently
large $n$. This can be proved directly for many specific elements $w$, but a general argument is lacking.
Therefore we make the following conjecture:
\begin{conjecture}
Let $w \in [F_2,F_2]$ be arbitrary. Then for sufficiently large integers $n$, there is an equality
$$\scl(w[a,b]^n) = \rot_S(w[a,b]^n])/2$$
where $\rot_S$ is the rotation quasimorphism associated with the realization of $F_2$ as
$\pi_1(S)$ where $S$ is a hyperbolic once-punctured torus.
\end{conjecture}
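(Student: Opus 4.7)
The plan is to reduce the conjecture to Theorem~C via a boundary band-sum surgery on immersed surfaces. By Proposition~\ref{rot_is_scl}, the statement is equivalent to asserting that the geodesic representing the conjugacy class of $w[a,b]^n$ in the once-punctured torus $S$ rationally bounds a positive immersed surface for all sufficiently large $n$.

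The first step is to apply Theorem~C to the chain $C = w$, obtaining some $R_0$ such that for every integer $R \ge R_0$ the chain $w + R[a,b]$ rationally bounds a positive immersed surface $f \colon T_R \to S$. After clearing denominators and passing to a finite cover furnished by Scott's LERF theorem (exactly as in the proof of Theorem~C), one may assume $T_R$ has one boundary component $\beta_w$ covering the geodesic for $w$ with some uniform degree $d$, together with $Rd$ further boundary components $\beta_1, \ldots, \beta_{Rd}$ each covering $\partial S = [a,b]$ once and embedded in $S$. The plan is then to attach $Rd$ pairwise disjoint oriented $1$-handles to $T_R$, each running from $\beta_w$ to a distinct $\beta_i$ along a short arc in $S$ near a common basepoint. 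Topologically the result is a surface whose single boundary component traces $w[a,b]^{Rd}$; if the handle attachments can be arranged to preserve the immersion, this proves the conjecture for $n = Rd$, and hence for all sufficiently large $n$.

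The crux, and the point at which the strategy becomes delicate, is ensuring that the handle attachments preserve the immersion property. Generically the cores of the bands intersect the rest of the image of $T_R$ transversely, creating new self-intersections. One would like to choose the attaching arcs in a small disk neighborhood of a common basepoint and, if necessary, pass to yet a further finite cover in which the boundary ends of $T_R$ near that basepoint can be separated and threaded through the handle region disjointly. As a sanity check on where the difficulty lies, combining the Addition Lemma~\ref{addition_lemma} with the retraction $F_2 * \langle x \rangle \to F_2$ sending $x \mapsto 1$ yields
$$\scl_{F_2}(w[a,b]^n) \le \scl_{F_2}(w + n[a,b]) + \tfrac{1}{2},$$
and Theorem~C identifies the right-hand side with $(\rot_S(w) + n\rot_S([a,b]))/2 + \tfrac{1}{2}$ for large $n$. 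This overshoots the conjectured value $\rot_S(w[a,b]^n)/2$ by at most $1$, owing to the defect bound $|\rot_S(w[a,b]^n) - \rot_S(w) - n\rot_S([a,b])| \le 1$. Closing this gap --- equivalently, exhibiting an immersed surface of exactly the right Euler characteristic --- is the heart of the conjecture, and no abstract duality argument is known to recover the deficit.
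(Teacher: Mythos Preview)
The statement you are addressing is a \emph{conjecture} in the paper, not a theorem: immediately before stating it, the author writes that the equality ``can be proved directly for many specific elements $w$, but a general argument is lacking.'' There is therefore no proof in the paper to compare against, and your proposal is not expected to succeed.

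To your credit, you recognize this by the end: your final paragraph correctly derives the bound $\scl(w[a,b]^n) \le \scl(w + n[a,b]) + \tfrac{1}{2}$ from the Addition Lemma and the retraction $x \mapsto 1$, and correctly observes that this overshoots the conjectured value by up to $1$, a deficit you cannot recover. That honest assessment is the right conclusion; everything before it should be regarded as motivation, not as steps of a proof.

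That said, the earlier steps have their own gaps that you should be aware of. Scott's LERF theorem concerns passing to finite covers of the \emph{target} surface $S$ so that given loops become embedded; it does not let you arrange that the \emph{source} surface $T_R$ has a single boundary component over $w$. The covering trick in the proof of Lemma~\ref{addition_is_transitive} equalizes degrees of boundary components but does not reduce their number. Moreover, even granting one boundary component $\beta_w$ of degree $d$ over $w$, band-summing it with $Rd$ copies of $[a,b]$ does not produce a boundary curve freely homotopic to $w[a,b]^{Rd}$; the resulting word depends on where along $\beta_w$ the bands are attached and is in general only of the form $w_1[a,b]w_2[a,b]\cdots$ with $w_1\cdots w_d$ conjugate to $w^d$. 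Finally, even if the construction worked for $n = Rd$, the degree $d$ may depend on $R$, so you would not immediately obtain the statement for all sufficiently large $n$. These issues are secondary to the main obstruction you already identified --- preserving the immersion under handle attachment while controlling Euler characteristic --- but they would need to be addressed in any genuine attempt.
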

This certainly holds for many $w$. 
On the other hand, it is worth remarking that the projective classes
of such chains $w[a,b]^n$ are necessarily in the {\em boundary}
of the face $\pi_S$ of the $\scl$ norm ball.
\end{remark}

One can also deduce interesting corollaries for chains in closed
surface groups:

\begin{corollary}\label{injective_corollary}
Let $S$ be a closed, orientable surface with $\chi(S)<0$. Any
rational chain $C \in B_1^H(\pi_1(S);\R)$ rationally bounds an
injective surface.
\end{corollary}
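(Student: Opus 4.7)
The plan is to invoke Theorem~C directly for the closed case and then observe that the resulting positive immersion is automatically $\pi_1$-injective. Since $S$ is closed, $\partial S$ is the empty $1$-chain, so the chain $R\partial S + C$ reduces to $C$ for every $R$. The remark following Theorem~C emphasizes that its proof does not require $\partial S\neq\emptyset$, only $\chi(S)<0$, which is in force here. Thus Theorem~C immediately furnishes a positive immersion $f\colon T\to S$ whose boundary rationally represents $C$; the remaining work is to promote ``immersed'' to ``$\pi_1$-injective''.

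Fix a hyperbolic metric on $S$ and pull it back along $f$, so that $T$ becomes a hyperbolic surface with totally geodesic boundary in which $f$ is a local isometry. Disk components of $T$ are ruled out: their boundary is null-homotopic in the component but $f$ sends it to a positive multiple of a closed geodesic of $S$, which is a non-trivial element of $\pi_1(S)$. Annulus components $\pi_1$-inject automatically, because their cyclic fundamental group is generated by a loop whose image in $\pi_1(S)$ is a non-trivial power of a closed geodesic and $\pi_1(S)$ is torsion-free. For a component $T_0$ with $\chi(T_0)<0$, any non-trivial conjugacy class in $\pi_1(T_0)$ is represented by a closed geodesic $c$ in $T_0$, either an interior closed geodesic or a multiple of a boundary component. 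Since $f$ is a local isometry, $f\circ c$ is a closed geodesic in $S$, hence essential; so $f_*\colon\pi_1(T_0)\to\pi_1(S)$ is injective.

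The one substantive step is this last one: a positive immersion between hyperbolic surfaces with geodesic boundary is $\pi_1$-injective. This is the same argument underlying Prop.~2.96 of \cite{Calegari_scl} (``extremal surfaces are $\pi_1$-injective''); the key inputs are only that $S$ is closed hyperbolic, $f$ is a local isometry, and every non-trivial free homotopy class in $T_0$ has a closed geodesic representative. No extremality is actually needed at this stage. The only place the ``closed'' hypothesis does work is at the very beginning, where it lets us drop the $R\partial S$ correction term from Theorem~C and obtain an immersion bounded by $C$ itself rather than by $C + R\partial S$.
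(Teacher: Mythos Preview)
Your proposal is correct and follows exactly the same two-step strategy as the paper's proof: apply Theorem~C (using that $\partial S=\emptyset$ so the correction term vanishes) to obtain a positive immersion, then observe that an immersion between hyperbolic surfaces with geodesic boundary is $\pi_1$-injective. The paper dispatches the second step in a single sentence, whereas you spell out the local-isometry/geodesic-representative argument in detail; your treatment of disk and annulus components is in fact superfluous, since Gauss--Bonnet on the pulled-back metric already forces every component of $T$ to have negative Euler characteristic, but it does no harm.
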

\begin{proof}
By Theorem~C the chain $C$ rationally
bounds a positive immersed surface in $S$. But an immersion
between hyperbolic surfaces with geodesic boundary is necessarily
$\pi_1$-injective. 
\end{proof}

\begin{remark}
Theorem~C unfortunately does not settle the question
of whether $\scl$ is rational in closed (orientable) surface groups. A positive immersed
surface $T$ with rational boundary a chain $C \in B_1^H(S)$ is extremal in its
relative homology class, but the set of relative homology
classes with boundary $C$ is a torsor for the group $H_2(S;\Z)$, which is
infinite when $S$ is closed and orientable. One can translate this into
an absolute statement about $\scl$ at slight cost. 

Let $S$ be a closed hyperbolic surface,
and let $M$ be the unit tangent bundle of $S$.
The Euler class of $M$, thought of as an oriented 
circle bundle over $S$, is an element of $H^2(S;\Z)$ whose evaluation on
the fundamental class of $S$ is $\chi(S)$. The fundamental group of $M$ is
a central extension
$$0 \to \Z \to \pi_1(M) \to \pi_1(S) \to 0$$
corresponding to the Euler class in $H^2(S;\Z)$. Given $g \in [\pi_1(S),\pi_1(S)]$
there are many distinct lifts $\widehat{g}$ to $\pi_1(M)$ which differ by elements of
$\chi(S)\cdot\Z$. Theorem~C implies that for all but finitely
many lifts, $\scl_{\pi_1(M)}(\widehat{g}) \in \frac 1 2 \Z$ and $\widehat{g}$
is represented by a loop which rationally bounds an extremal surface
whose projection to $S$ is an immersion.
\end{remark}

The next corollary is a purely group-theoretic statement about
homologically trivial elements in closed surface groups.

\begin{corollary}\label{bounds_in_free_subgroup}
Let $G$ be a closed, orientable surface group with negative Euler characteristic,
and $g \in [G,G]$. Then there is a free subgroup $F$ of $G$ and elements
$g_i$ which are conjugates of positive powers of $g$ such that every $g_i \in F$,
and the product of the $g_i$ is in $[F,F]$.
\end{corollary}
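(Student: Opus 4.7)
Realize $G$ as $\pi_1(S)$ for a closed hyperbolic surface $S$, so that $g$ determines a rational $1$-chain in $B_1^H(\pi_1(S);\R)$. My plan is to apply Corollary~\ref{injective_corollary} to this chain, obtaining a positive, $\pi_1$-injective immersion $f:T\to S$ together with a positive integer $n$ with $\partial f_*[\partial T] = n[\gamma]$, where $\gamma$ is the geodesic representative of $g$. Each boundary component of $T$ then covers $\gamma$ by an orientation-preserving map of positive degree.

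I next pass to a single connected component of $T$: each component is by itself a positive, $\pi_1$-injective immersion whose boundary wraps positively around $\gamma$, so individually it rationally bounds a positive multiple of $g$. Disk components are ruled out because their boundaries would be nullhomotopic positive powers of $g$ in $S$, and annulus components are ruled out because they would force $g^k$ to be conjugate to $g^{-k'}$ in the torsion-free closed surface group $G$ for some $k,k'>0$, which is impossible (primitive roots are unique up to conjugacy, so conjugation preserves the sign of the exponent). So I may take $T$ to be a connected surface with nonempty boundary and $\chi(T)<0$, whence $\pi_1(T)$ is a noncyclic free group and $F := f_*(\pi_1(T,p_0))$ is a free subgroup of $G$.

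To produce the elements $g_i$, I fix a basepoint $p_0\in T$, choose arcs $\alpha_1,\ldots,\alpha_m$ in $T$ from $p_0$ to basepoints on the boundary components $\partial_1,\ldots,\partial_m$ of $T$, and set $c_j := \alpha_j\partial_j\alpha_j^{-1} \in \pi_1(T,p_0)$ and $g_j := f_*(c_j) \in F$. Since $f\circ\partial_j$ is a positive cover of $\gamma$ of some degree $k_j>0$, the element $g_j$ is a $G$-conjugate of $g^{k_j}$, as required. Finally, the standard polygonal presentation of $\pi_1(T,p_0)$ for a surface of genus $h$ with $m$ boundary components gives a relation of the form $c_1c_2\cdots c_m = \prod_{i=1}^{h}[a_i,b_i]^{-1}$ (in the appropriate cyclic ordering of the boundary), so $c_1\cdots c_m \in [\pi_1(T),\pi_1(T)]$, and applying $f_*$ yields $g_1\cdots g_m \in [F,F]$.

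The main obstacle has already been discharged: it is Corollary~\ref{injective_corollary}, i.e.\ Theorem~C applied to the closed case. Granted that deep input, the remaining argument is a short exercise in surface topology, with the only real subtlety being the reduction to a connected $T$ with $\chi(T)<0$, handled by the dichotomy above.
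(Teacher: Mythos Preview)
Your argument is correct and is precisely the natural deduction the paper has in mind: the corollary is stated there without proof, as an immediate consequence of Corollary~\ref{injective_corollary} (i.e.\ Theorem~C in the closed case), and your write-up fills in exactly the routine surface-topology details needed to extract the algebraic statement from the existence of a $\pi_1$-injective positive immersion. The only points worth tightening are cosmetic: you should note that the case $g=1$ is trivial, and you could streamline the component analysis by observing that closed components (which would have to be finite covers of $S$) can simply be discarded, while your argument ruling out annuli via ``$g^k$ conjugate to $g^{-k'}$ is impossible in a torsion-free surface group'' is correct (if $hg^kh^{-1}=g^{-k'}$ then $h^2$ centralizes a power of $g$, hence lies in the cyclic centralizer $\langle g_0\rangle$, forcing $h$ itself to commute with $g$, a contradiction).
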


\begin{remark}
Corollary~\ref{bounds_in_free_subgroup} can be thought of as saying that a homologically
trivial loop in a closed surface is covered by a homologically trivial $1$-manifold
in some surface of infinite index. One can ask to make this result sharper:
\begin{question}
Let $G$ be a closed, orientable surface group with negative Euler characteristic,
and $g \in [G,G]$. Is there a free subgroup $F$ of $G$ such that $g \in [F,F]$?
\end{question}
One can ask an analogous question for any group $G$. This question is especially
interesting when $G=\pi_1(M)$ where $M$ is a closed $3$-manifold.
\end{remark}

Finally, we can use Theorem~C to construct injective closed
surface groups representing homology classes in certain graphs of groups.
The following is the analogue of the main theorem from \cite{Calegari_surface}.

\begin{corollary}
Let $G$ be a graph of free and closed orientable surface groups with $\chi<0$ amalgamated
along cyclic subgroups, and let $A$ be a homology class in $H_2(G;\Q)$.
Let $S_i$ be the closed surface vertex subgroups, and 
$r_1, . . , r_n$ any rational numbers with all $|r_i|$ sufficiently large.
Then the class $A + r_1S_1 + r_2S_2 + . . + r_nS_n$
is rationally represented by a closed surface subgroup of $G$.
\end{corollary}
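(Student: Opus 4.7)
The plan is to build a closed oriented $\pi_1$-injective surface $\Sigma \to X$ realizing a rational multiple of $A + \sum_i r_i[S_i]$, where $X$ is the classifying space of $G$ as a graph of groups. As in the main theorem of \cite{Calegari_surface}, I would assemble $\Sigma$ from $\pi_1$-injective vertex subsurfaces glued along boundary circles mapping to edge generators; the new input compared with the all-free case is Theorem~C together with its closed-surface corollary~\ref{injective_corollary}, which provide the necessary $\pi_1$-injective fillings inside surface vertex groups.

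First, I would represent $A$ by a rational 2-cycle compatible with the Bass--Serre decomposition: rational 2-chains $A_v$ in each vertex group $G_v$ and rational multiplicities $m_e$ on each edge generator $c_e$, satisfying $\partial A_v = \sum_{e \ni v}\pm m_e c_e$ in $B_1(G_v;\Q)$. Replacing $A_{S_i}$ by $A_{S_i} + r_i[S_i]$ at each closed surface vertex preserves the edge data and yields a 2-cycle representing $A + \sum_i r_i[S_i]$.

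Next, at each vertex I would produce a $\pi_1$-injective filling. At a closed surface vertex $S_i$: the chain $\partial A_{S_i}$ lies in $B_1^H(\pi_1(S_i);\Q)$, so Corollary~\ref{injective_corollary} supplies a positive immersed $T_i \to S_i$ rationally bounding $\partial A_{S_i}$. The relative class $[T_i] \in H_2(S_i, \partial T_i;\Q)$ is a torsor over $H_2(S_i;\Z) \cong \Z$, adjusted by connect-summing with copies of $S_i$; for $|r_i|$ sufficiently large and of appropriate sign we can realize $[T_i] = A_{S_i} + r_i[S_i]$. At a free vertex $v$: realize $F_v = \pi_1(\Sigma_v)$ for a compact hyperbolic surface $\Sigma_v$ whose boundary is a positive combination of the edge generators $c_e$ at $v$, and apply Theorem~C. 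After scaling the whole 2-cycle by a large integer (absorbed into ``rationally represents''), the chain $\partial A_v + R\partial\Sigma_v$ rationally bounds a positive immersed $T_v \to \Sigma_v$ for some large $R$; the extra $R\partial\Sigma_v$ is absorbed as additional multiplicity on the edges at $v$.

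Finally, I would glue the vertex subsurfaces along their boundary circles at edges. By construction the signed multiplicities of $c_e$ agree on both sides of every edge, so gluing produces a closed oriented $\Sigma \to X$ representing a rational multiple of the target class. Each vertex piece is positively immersed and hence $\pi_1$-injective into its vertex group by \cite{Calegari_scl}, Prop.~2.96, and Bass--Serre theory then promotes this to $\pi_1$-injectivity of the whole map $\Sigma \to X$. The main obstacle is the global bookkeeping that makes these local constructions simultaneously consistent: the edge multiplicities (which grow at free vertices from the Theorem~C scaling), the relative classes (which must equal $A_{S_i} + r_i[S_i]$ at every surface vertex), and the edge matching. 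The hypothesis ``$|r_i|$ sufficiently large'' is precisely the slack needed at each closed surface vertex to realize the prescribed relative class independently of the remaining data, so that the global assembly closes up.
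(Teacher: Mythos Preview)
Your overall architecture matches the argument of Thm.~3.4 in \cite{Calegari_surface}: represent $A$ by compatible relative vertex $2$-chains, fill each vertex by a $\pi_1$-injective subsurface rationally bounding the prescribed edge chain, and glue along edges via Bass--Serre theory. Your treatment of the closed surface vertices via Corollary~\ref{injective_corollary}, with the $r_i[S_i]$ absorbing the $H_2(S_i)$-torsor ambiguity in the relative class, is exactly the new ingredient the paper intends.

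The gap is at the free vertices. You assume one can realize $F_v = \pi_1(\Sigma_v)$ with $\partial\Sigma_v$ a positive combination of the edge generators $c_e$, but this is not generally possible: the $c_e$ are arbitrary elements of $F_v$ and need not be simultaneously boundary-parallel in any surface with fundamental group $F_v$ (indeed if some $c_e$ is not primitive in $F_v$ it cannot be represented by a simple closed curve at all). Without this, the Theorem~C surface at $v$ has rational boundary $\partial A_v + R\,\partial\Sigma_v$ where $\partial\Sigma_v$ involves conjugacy classes that are \emph{not} edge generators and therefore cannot be matched across any edge, so your claimed edge-multiplicity agreement fails. The correct tool at a free vertex is not Theorem~C but the Rationality Theorem of \cite{Calegari_pickle}: every rational chain in $F_v$ bounds an extremal surface, which is $\pi_1$-injective (\cite{Calegari_scl}, Prop.~2.96) and has rational boundary exactly $\partial A_v$ with no extra terms; since $H_2(F_v)=0$ there is no relative-class ambiguity to absorb. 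This is what the argument of \cite{Calegari_surface} already does, and Theorem~C is invoked only at the closed surface vertices, precisely where $H_2\ne 0$ forces the $r_i[S_i]$ correction.
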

\begin{proof}
This follows immediately by the argument of Thm.~3.4 from \cite{Calegari_surface}, and
Theorem~C.
\end{proof}

It is natural to wonder whether every rational chain in $B_1^H(F)$, where $F$ is a free
group, is projectively contained in the interior of a face of the $\scl$ norm of finite codimension, but in
fact this is not the case, as the following example shows.

\begin{example}\label{infinite_codimension}
By Bavard duality, the codimension of the face whose interior contains the projective
class of a rational chain $C$ is one less than the dimension of the space of
extremal quasimorphisms for $C$ (mod $H^1$). Hence to exhibit a rational chain (in fact, an
element of $[F,F]$) which is in the interior of a face of infinite codimension,
it suffices to exhibit a chain which admits an infinite dimensional space of extremal
quasimorphisms.

Let $F = F_1 * F_2$ where $F_1$ and $F_2$ are both free of rank at least $2$, and let
$g \in [F_1,F_1]$ be nontrivial. Let $\phi_1 \in Q(F_1)$ be extremal for $g$, and let $\phi_2 \in Q(F_2)$
be arbitrary with $D(\phi_2)\le D(\phi_1)$. By the Hahn-Banach theorem, there exists
$\phi \in Q(F)$ which agrees with $\phi_i$ on $F_i$, and satisfies $D(\phi) = D(\phi_1)$.

Another (more direct) way to see that $g$ is contained in a face of infinite codimension
is as follows. Let $h$ be a nontrivial element of $[F_2,F_2]$. 
Let $X_i$ be a $K(F_i,1)$ (e.g. a wedge of two circles) for $i=1,2$. Let $X$ be a $K(F,1)$
obtained by joining $X_1$ to $X_2$ by an edge $e$. Let $f:S \to X$ be extremal for the chain
$g^n + h$. If $S$ maps over $e$, we can compress $S$ along the preimage of a generic point on $e$,
reducing $-\chi^-(S)$, which is absurd since $S$ is extremal. Hence $S$ consists of two surfaces
$S_1,S_2$ one of which is extremal for $g^n$ in $X_1$, and one of which is extremal for $h$ in $X_2$.
Hence 
$$\scl_F(ng + h) = \scl_{F_1}(g^n) + \scl_{F_2}(h) = \scl_{F_1}(g^n) + \scl_{F_2}(h^{-1}) = \scl_F(ng - h)$$
so $g$ is not in the interior of a top face of the $\scl$ norm restricted to the
vector subspace of $B_1^H(F)$ spanned by $g$ and $h$.
\end{example}

\section{Acknowledgment}

I would like to thank Marc Burger, Benson Farb, David Fisher, \'Etienne Ghys,
Bill Goldman, Walter Neumann, Cliff Taubes and Anna Wienhard for their comments.
I am also very grateful to the anonymous referee for corrections and
thoughtful comments. While writing this paper I was partially funded by NSF grant DMS 0707130.

\end{document}